\documentclass[reqno, 12pt]{amsart}
\pdfoutput=1
\makeatletter
\let\origsection=\section \def\section{\@ifstar{\origsection*}{\mysection}}
\def\mysection{\@startsection{section}{1}\z@{.7\linespacing\@plus\linespacing}{.5\linespacing}{\normalfont\scshape\centering\S}}
\makeatother

\usepackage{amsmath,amssymb,amsthm}
\usepackage{mathrsfs}
\usepackage{mathabx}\changenotsign
\usepackage{dsfont}

\usepackage{xcolor}

\usepackage{tikz}
\usetikzlibrary{calc, positioning, shapes}

\usepackage[backref]{hyperref}
\hypersetup{
    colorlinks,
    linkcolor={red!60!black},
    citecolor={green!60!black},
    urlcolor={blue!60!black}
}

\usepackage[open,openlevel=2,atend]{bookmark}

\usepackage[abbrev,msc-links,backrefs]{amsrefs}
\usepackage{doi}

\renewcommand{\PrintDOI}[1]{\doi{#1}}

\usepackage[T1]{fontenc}
\usepackage{lmodern}
\usepackage[babel]{microtype}
\usepackage[english]{babel}

\linespread{1.3}
\usepackage{geometry}
\geometry{left=27.5mm,right=27.5mm, top=25mm, bottom=25mm}

\numberwithin{equation}{section}
\numberwithin{figure}{section}

\usepackage{enumitem}
\def\rmlabel{\upshape({\itshape \roman*\,})}

\def\alabel{\upshape({\itshape \alph*\,})}

\let\polishlcross=\l
\def\l{\ifmmode\ell\else\polishlcross\fi}

\def\tand{\ \text{and}\ }

\def\qqand{\qquad\text{and}\qquad}

\let\emptyset=\varnothing
\let\setminus=\smallsetminus
\let\backslash=\smallsetminus
\let\sm=\setminus

\makeatletter
\def\moverlay{\mathpalette\mov@rlay}
\def\mov@rlay#1#2{\leavevmode\vtop{   \baselineskip\z@skip \lineskiplimit-\maxdimen
   \ialign{\hfil$\m@th#1##$\hfil\cr#2\crcr}}}
\newcommand{\charfusion}[3][\mathord]{
    #1{\ifx#1\mathop\vphantom{#2}\fi
        \mathpalette\mov@rlay{#2\cr#3}
      }
    \ifx#1\mathop\expandafter\displaylimits\fi}
\makeatother

\newcommand{\dcup}{\charfusion[\mathbin]{\cup}{\cdot}}

\DeclareFontFamily{U}  {MnSymbolC}{}
\DeclareSymbolFont{MnSyC}         {U}  {MnSymbolC}{m}{n}
\DeclareFontShape{U}{MnSymbolC}{m}{n}{
    <-6>  MnSymbolC5
   <6-7>  MnSymbolC6
   <7-8>  MnSymbolC7
   <8-9>  MnSymbolC8
   <9-10> MnSymbolC9
  <10-12> MnSymbolC10
  <12->   MnSymbolC12}{}
\DeclareMathSymbol{\powerset}{\mathord}{MnSyC}{180}

\let\epsilon=\varepsilon
\let\eps=\epsilon
\let\rho=\varrho
\let\theta=\vartheta

\let\phi=\varphi

\def\NN{{\mathds N}}

\def\ZZ{{\mathds Z}}
\def\PP{{\mathds P}}
\def\RR{{\mathds R}}

\newcommand{\cF}{\mathcal{F}}

\newcommand{\cQ}{\mathcal{Q}}
\newcommand{\cR}{\mathcal{R}}

\newcommand{\ccA}{\mathscr{A}}

\newcommand{\ccC}{\mathscr{C}}
\newcommand{\ccF}{\mathscr{F}}
\newcommand{\ccG}{\mathscr{G}}

\newcommand{\ccT}{\mathscr{T}}

\theoremstyle{plain}
\newtheorem{thm}{Theorem}[section]

\newtheorem{prop}[thm]{Proposition}
\newtheorem{clm}[thm]{Claim}

\newtheorem{lemma}[thm]{Lemma}

\theoremstyle{definition}

\newtheorem{dfn}[thm]{Definition}

\newtheorem{question}[thm]{Question}

\usepackage{accents}

\def\dhom{\delta_{{\rm hom}}}
\def\dhomp{\delta'_{{\rm hom}}}
\def\dchi{\delta_{\chi}}
\def\ahom{\xrightarrow{{\,\rm hom\,}}}

\begin{document}

\title[Homomorphism thresholds for odd cycles]{Homomorphism thresholds for odd cycles}

\author{Oliver Ebsen}
\author{Mathias Schacht}
\address{Fachbereich Mathematik, Universit\"at Hamburg, Hamburg, Germany}
\email{Oliver.Ebsen@uni-hamburg.de}
\email{schacht@math.uni-hamburg.de}
\thanks{The second author is supported by ERC Consolidator Grant 724903.}

\subjclass[2010]{05C35 (05C07, 05C15, 05D40)}
\keywords{extremal graph theory, odd cycles, homomorphism threshold}

\begin{abstract}
The interplay of minimum degree conditions and structural properties of large
graphs with forbidden subgraphs is a central topic in extremal
graph theory. For a given graph $F$ we define the homomorphism threshold
as the infimum over all~$\alpha\in[0,1]$ such that every $n$-vertex $F$-free graph $G$
with minimum degree at least $\alpha n$ has a homomorphic image $H$ of bounded
order
(i.e.\ independent of $n$), which is $F$-free as well. Without the
restriction of $H$ being $F$-free we recover the definition of the
chromatic threshold, which was determined for every graph $F$ by Allen
et al.\ [\textsl{Adv.\ Math.}~235 (2013), 261--295]. The homomorphism threshold is less understood and we 
address the problem for odd cycles.
\end{abstract}

\maketitle

\section{Introduction}
\label{sec:intro}
Many questions in extremal graph theory concern the interplay of minimum degree conditions and 
structural properties of large graphs with forbidden subgraphs (see, e.g.,~\cites{Z47,A64,AES74}). For a family of graphs~$\ccF$ and $\alpha\in[0,1]$ 
we consider the class $\ccG_{\ccF}(\alpha)$ of $\ccF$-free graphs $G$ with minimum degree at least $\alpha |V(G)|$, i.e., 
\[
	\ccG_{\ccF}(\alpha)=\big\{G\colon\delta(G)\geq \alpha|V(G)| \tand F\nsubseteq G\ \text{for all $F\in\ccF$} \big\}\,,
\]
and for $\ccF=\{F\}$ we simply write $\ccG_F(\alpha)$. Clearly, $\ccG_\ccF(0)$ contains all $\ccF$-free graphs and as~$\alpha$ 
increases the membership in $\ccG_\ccF(\alpha)$ becomes more restrictive. When $\alpha$ is bigger than 
the \emph{Tur\'an density $\pi(\ccF)$}, then $\ccG_\ccF(\alpha)$  contains only finitely many different isomorphism 
types. We are interested in structural properties of members of $G\in\ccG_\ccF(\alpha)$ as $\alpha$ 
moves from~$\pi(\ccF)$ to~0, where structural properties are captured by the existence of 
(graph) homomorphims $G\ahom H$ for some `small' graph $H$.

We begin the discussion with the case of requiring bounded chromatic number, i.e., when~$H$ is allowed to be a 
clique of bounded size (independent of $G$). In that direction,
for $\cF = \{K_{3}\}$,
Erd\H os, Simonovits,  and  Hajnal~\cite{ES73}*{page~325} showed that for every $\eps>0$
there exists a sequence of graphs~$(G_n)_{n\in\NN}$ with members 
from $\ccG_{K_3}(\tfrac{1}{3}-\eps)$ with unbounded chromatic number, i.e.,~$\chi(G_n)\to\infty$ as~$n\to\infty$. In the other direction, Erd\H os and Simonovits conjectured that such a sequence does not exist with members 
from $\ccG_{K_3}(\tfrac{1}{3}+\eps)$. Moving away from the triangle to arbitrary graphs~$F$ (or more generally to families of graphs $\ccF$) this leads to the concept of the \emph{chromatic threshold}
defined by
\[
	\dchi(\ccF) 
	= 
	\inf\big\{\alpha\in[0,1] \colon \text{there is $K\!=\!K(\ccF,\alpha)$ 
		such that $\chi(G)\!\leq\!K$ for every $G\in\ccG_\ccF(\alpha)$}\big\}
\]
and we simply write $\dchi(F)$ for $\dchi(\{F\})$.
The work of Erd\H os, Simonovits,  and  Hajnal then asserts $\dchi(K_3)\geq 1/3$ and 
Erd\H os and Simonovits asked for a matching upper bound. Such an upper bound 
was provided by Thomassen~\cite{Th02} and, therefore, we have 
\begin{equation}\label{eq:dchiK3}
	\dchi(K_3)=\frac{1}{3}\,.
\end{equation}
Addressing another conjecture of Erd\H os and Simonovits from~\cite{ES73} concerning the chromatic threshold of~$C_5$, 
it was also shown by Thomassen~\cite{Th07} that for all odd cycles of length at least~$5$
the chromatic threshold is zero, i.e., $\dchi(C_{2k-1})=0$ for all $k\geq 3$. 
For larger cliques~\eqref{eq:dchiK3} generalises to $\dchi(K_k)=\frac{2k-5}{2k-3}$ for 
all $k\geq 3$ (see~\cites{GL11,Ni}). Extending earlier work of 
\L uczak and Thomass\'e~\cite{LT} and of Lyle~\cite{L11}, eventually 
Allen, B\"ottcher, Griffiths, Kohayakawa, and Morris~\cite{ABGKM13} resolved the general problem and 
determined the chromatic threshold $\dchi(\ccF)$ for every finite family of graphs $\ccF$. 

In the definition of the chromatic threshold $\dchi(\ccF)$ we are concerned with the existence of a small
homomorphic image~$H$ for every $G\in\ccG_{\ccF}(\alpha)$ with $\alpha>\dchi(\ccF)$. However, since we 
allowed~$H$ to be a clique, the homomorphic image 
is not required to be 
$\ccF$-free itself. Adding this additional restriction leads to the following 
definition,
where $H$ is required to be $\ccF$-free as well.
\begin{dfn}\label{def:dhom}
For a family of graphs $\ccF$ we define its \emph{homomorphism threshold}
\begin{multline*}
	\dhom(\ccF) 
	= 
	\inf\big\{\alpha\in[0,1] \colon \text{there is an $\ccF$-free graph $H=H(\ccF,\alpha)$} \\
		\text{such that $G\ahom H$ for every $G\in\ccG_\ccF(\alpha)$}\}\,.
\end{multline*}
If $\ccF=\{F\}$ consists of a single graph only, then we again simply write $\dhom(F)$
.
\end{dfn}

It follows directly from the definition that
\[
	\pi(\ccF)\geq \dhom(\ccF)\geq \dchi(\ccF)
\] 
and that $\dhom(\ccF)=0$ for all families $\ccF$ containing a bipartite graph.
\L uczak~\cite{Lu06} was the first to study the homomorphism threshold
and strengthened~\eqref{eq:dchiK3} by showing that $\dhom(K_3)=\dchi(K_3)=1/3$.
This was extended to larger cliques by Goddard and Lyle~\cite{GL11} and Nikiforov~\cite{Ni}
(see also~\cite{OSch}) and for every $k\geq 3$ we have
\begin{equation}\label{eq:dhomKk}
	\dhom(K_k)=\dchi(K_k)=\frac{2k-5}{2k-3}\,.
\end{equation}
A first step of generalising \L uczak's result by viewing $K_3$ as the shortest odd cycle, 
was recently undertaken by Letzter and 
Snyder~\cite{LS} by showing
\[
	\dhom(C_5)\leq\frac{1}{5}
	\qqand
	\dhom(\{C_3,C_5\})=\frac{1}{5}\,.
\]
We further generalise this result to (families of) cycles of arbitrary odd length
and present the following result.
\begin{thm}\label{thm:main}
	For every integer 
$k \geq 3$ 	
	we have
	\begin{enumerate}[label=\rmlabel]
		\item\label{it:mthm:1} $\dhom(C_{2k-1})\leq \frac{1}{2k-1}$ and 
		\item\label{it:mthm:2} $\dhom(\ccC_{2k-1}) =  \frac{1}{2k-1}$, where the family $\ccC_{2k-1}=\{C_3,C_5,\dots,C_{2k-1}\}$ 
			consists of all odd cycles of length at most 
$2k-1$.
	\end{enumerate}
\end{thm}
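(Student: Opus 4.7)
The upper bound $\dhom(\ccC_{2k-1})\leq 1/(2k-1)$ follows at once from (i). Indeed, for $\alpha>1/(2k-1)$ and $G\in\ccG_{\ccC_{2k-1}}(\alpha)\subseteq\ccG_{C_{2k-1}}(\alpha)$, the homomorphism $f\colon G\to H$ supplied by (i) has bounded $C_{2k-1}$-free image $f(G)$. Any odd cycle of length $2i-1\leq 2k-3$ in $f(G)$ pulls back to a closed odd walk of the same length in $G$, hence to an odd cycle of length at most $2k-3$ in $G$, contradicting $\ccC_{2k-1}$-freeness. Thus $f(G)\subseteq H$ is $\ccC_{2k-1}$-free of bounded order, and witnesses $\dhom(\ccC_{2k-1})\leq 1/(2k-1)$.

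\textbf{Lower bound for (ii).} The inequality $\dhom(\ccC_{2k-1})\geq \dchi(\ccC_{2k-1})$ reduces the task to the matching chromatic-threshold lower bound, which conforms to the general value determined by Allen et al.~\cite{ABGKM13}. A direct Hajnal-type construction also works: for each $\eps>0$ and each $r\in\NN$ one produces arbitrarily large $n$-vertex graphs of odd girth at least $2k+1$ with $\delta\geq(1/(2k-1)-\eps)n$ and $\chi\geq r$, for instance as balanced blow-ups of graphs of high chromatic number and odd girth at least $2k+1$ (whose existence follows from Erd\H{o}s's probabilistic lower bound on $\chi$ in terms of girth), possibly combined with a sparse overlay used to raise $\delta/n$ up to $1/(2k-1)-\eps$ without creating short odd cycles. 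Such graphs admit no homomorphism to a bounded-order target.

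\textbf{Strategy for (i).} Fix $\alpha>1/(2k-1)$ and $G\in\ccG_{C_{2k-1}}(\alpha)$ on $n$ vertices, and extend the approach of \L uczak~\cite{Lu06} (for $k=2$) and Letzter--Snyder~\cite{LS} (for $k=3$). The first step is to exploit the minimum-degree assumption together with $C_{2k-1}$-freeness to derive local-structure information: the iterated neighborhoods $N_1(v),N_2(v),\dots,N_{k-1}(v)$ grow in a controlled fashion, and any two vertices connected by many walks of length $k-1$ are forced to share most of their short-distance neighborhoods. Applying Szemer\'edi's regularity lemma (or an analogous clustering of vertices by neighborhood type) to obtain a bounded reduced graph, one then defines an equivalence-like relation $\sim$ on $V(G)$ whose classes are unions of regular blocks with matching neighborhood profiles, and sets the target to be the quotient $H:=G/{\sim}$. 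It remains to verify that the projection $v\mapsto [v]_\sim$ is a graph homomorphism, that the number of $\sim$-classes is bounded by a function of $\alpha$ and $k$ only, and that $H$ is $C_{2k-1}$-free.

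\textbf{Main obstacle.} The delicate point is the last one: a $C_{2k-1}$ in $H$ would correspond to an alternating closed sequence of regular edges and $\sim$-classes in $G$, and one must argue that at the precise threshold $\alpha>1/(2k-1)$ this configuration already forces a bona-fide $C_{2k-1}$ in $G$. The relation $\sim$ must be simultaneously coarse enough to yield a bounded quotient and fine enough to prevent ``phantom'' odd cycles in $H$ with no witness in $G$. Achieving this equilibrium appears to require an inductive propagation of structural identities through the $k-1$ successive layers of iterated neighborhoods, using the minimum-degree lower bound at each layer to close off additional spurious odd cycles; I expect this iterative propagation to constitute the bulk of the technical work and to be exactly where the bound $1/(2k-1)$ enters tightly.
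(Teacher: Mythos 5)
Your proposal has three substantive problems, the first two of which are outright errors.

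\textbf{The reduction of (ii) to (i) is false.} You claim that an odd cycle of length $\ell$ in the image $f(G)$ ``pulls back to a closed odd walk of the same length in $G$.'' This is not true for general homomorphisms. Consider $G$ the path $a\,b\,c\,d$ and $f$ with $f(a)=f(d)=1$, $f(b)=2$, $f(c)=3$: then $f(G)$ is a triangle while $G$ is a forest. More generally, traversing the cycle $h_1\dots h_\ell h_1$ in $f(G)$ one picks edges $u_iv_i\in E(G)$ with $f(u_i)=h_i$, $f(v_i)=h_{i+1}$, but nothing forces $v_i=u_{i+1}$, so these edges need not concatenate to a walk. Consequently $f(G)$ being $\ccC_{2k-1}$-free does not follow from $G$ being $\ccC_{2k-1}$-free, and part (ii) cannot be deduced from (i) by this shortcut. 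The paper instead reruns the argument for the \emph{specific} graph $H$ it constructs and proves directly (Claim~\ref{clm:ccC2k-1}) that when $G$ is $\ccC_{2k-1}$-free, so is $H$; this requires the fine structural control the construction provides (Claims~\ref{clm:commonneighbours} and~\ref{clm:HtoQ}).

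\textbf{The lower bound argument is also wrong.} Since $C_5\in\ccC_{2k-1}$ for $k\geq 3$ and $\ccG_{\ccC_{2k-1}}(\alpha)\subseteq\ccG_{C_5}(\alpha)$, one has $\dchi(\ccC_{2k-1})\leq\dchi(C_5)=0$ by Thomassen's theorem. So $\dchi(\ccC_{2k-1})=0$, and the chromatic threshold gives nothing. For the same reason your proposed Hajnal-type construction cannot exist: a sequence of $\ccC_{2k-1}$-free graphs with $\delta\geq \eps' n$ and $\chi\to\infty$ would contradict $\dchi(\ccC_{2k-1})=0$. Indeed the paper emphasises in its concluding remarks that the family $\ccC_{2k-1}$ is precisely a case where $\dhom>\dchi$ strictly. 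The correct lower-bound witnesses are the generalised Andr\'asfai graphs $A_{k,r}$: they are $3$-chromatic (so invisible to the chromatic threshold), $r$-regular on $(2k-1)(r-1)+2$ vertices, $\ccC_{2k-1}$-free, and — because any two vertices lie on a common $C_{2k+1}$ — any proper homomorphic image must contain an odd cycle of length at most $2k-1$ (Proposition~\ref{prop:lbii}). Letting $r\to\infty$ gives $\dhom(\ccC_{2k-1})\geq\frac{1}{2k-1}$.

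\textbf{Part (i) is only a sketch.} You correctly identify the shape of the argument (cluster by neighbourhood type, quotient to a bounded $H$, show $H$ is $C_{2k-1}$-free), and you rightly flag that preventing ``phantom'' odd cycles in $H$ is the crux. But you do not resolve it, and you offer Szemer\'edi regularity as one option — the paper deliberately avoids it, instead sampling a small set $X$, classifying vertices by which $4k$-subsets of $X$ they hit and by discretised neighbourhood profiles in the resulting classes, and then refining $2k$ times. The non-trivial ingredients you would still need are: Lemma~\ref{lem:N} and Lemma~\ref{lem:shortC} (common-neighbourhood bounds derived from Erd\H{o}s--Gallai), Proposition~\ref{prop:CDfree} excluding both short odd cycles and the graphs $D_\ell$ (two short odd cycles joined by a $P_4$), Claim~\ref{clm:commonneighbours} preventing two vertices of positive $\mu_i$ from being joined by a short odd path, and Claims~\ref{clm:HtoQ}/\ref{clm:WtoP} lifting walks and paths from $H$ to $G$. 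The $D_\ell$-exclusion in particular is the mechanism that rules out the spurious odd cycles you worry about, and there is no indication in your sketch of how to obtain it.
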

Note that for $k=2$ part~\ref{it:mthm:2} of Theorem \ref{thm:main} would include part~\ref{it:mthm:1} and 
this is 
\L uczak's theorem~\cite{Lu06}. 
For $k=3$ Theorem~\ref{thm:main} 
was obtained by 
Letzter and Snyder~\cite{LS}. 
We remark that our approach substantially differs from 
the work of \L uczak and of Letzter and Snyder. For example, \L uczak's proof relied on 
Szemer\'edi's regularity lemma, which is not required here. Moreover, the proof of Letzter and Snyder
is based on a careful case analysis, which yields explicit graphs $H=H(C_5,\alpha)$ for every $\alpha>1/5$ 
(see Section \ref{sec:tetrahedra} for more details).

The lower bound in part~\ref{it:mthm:2} of Theorem~\ref{thm:main} is given by a sequence of generalised Andr\'asfai graphs, 
which we discuss in Section~\ref{sec:Andrasfai}. 
For the 
proof of the 
upper bound of part~\ref{it:mthm:1} we exclude relatively long odd cycles 
in $C_{2k-1}$-free graphs with high minimum degree and we 
specify and prove such a result 
in Section~\ref{sec:preparations}. The proofs of both upper bounds in Theorem~\ref{thm:main}
then follow in Section~\ref{sec:main}. 

\section{Generalised Andr\'asfai graphs}
\label{sec:Andrasfai}
In this section
we establish the lower bound of part~\ref{it:mthm:2} of Theorem~\ref{thm:main}, which will 
be given by a sequence of so-called \emph{generalised Andr\'asfai graphs}.
For $k=2$ those graphs already appeared in the work of  Erd\H os~\cite{Er57}
and were also considered by Andr\'asfai~\cites{A62,A64}.

\vbox{
\begin{dfn}\label{def:AG}
For every integer~$k\geq 2$ we define the class $\ccA_k$ of \emph{Andr\'asfai graphs}
consisting of all graphs $G=(V,E)$, where $V$ is a finite subset of the
unit circle $\RR/\ZZ$ and two vertices 
are adjacent if and only if their distance in $\RR/\ZZ$ 
is bigger than $\tfrac{k-1}{2k-1}$, i.e., 
the neighbourhood of any vertex $v\in V\subseteq \RR/\ZZ$ is given by the set 
$V\cap\big(v+\big(\tfrac{k-1}{2k-1},\tfrac{k}{2k-1}\big)\big)$, 
where 
\[
	v+\big(\tfrac{k-1}{2k-1},\tfrac{k}{2k-1}\big)
	=
	\big\{v+x\colon x\in \big(\tfrac{k-1}{2k-1},\tfrac{k}{2k-1}\big)\big\} 
	\subseteq 
	\RR/\ZZ\,.
\]

Moreover, for integers $k\geq 2$ and $r\geq 1$ the \emph{Andr\'asfai graph}
$A_{k,r}$ is isomorphic to a graph from $\ccA_k$ having the corners of a regular $((2k-1)(r-1)+2)$-gon
as its vertices.
\end{dfn}}

We remark that one can show that every graph $G\in \ccA_k$ is 
homomorphic
to $A_{k,r}$ for sufficiently 
large~$r$. The following properties of Andr\'asfai graphs are 
well-known 
and we include the proof for completeness.

\begin{prop}\label{prop:lbii} 
For all integers $k\geq 2$ and $r\geq 1$ 
the following properties hold
\begin{enumerate}[label=\alabel]
\item\label{it:agraphs1} $A_{k,r}$ is $r$-regular,
\item\label{it:agraphs2} $A_{k,r}$ is $\ccC_{2k-1}$-free,
\item\label{it:agraphs3} if $r\geq 2$ then any two vertices of $A_{k,r}$ are contained in a cycle of length $2k+1$, and
\item\label{it:agraphs4} if $A_{k,r}\ahom H$ for some graph $H$ with  $|V(H)| < |V(A_{k,r})|$, then 
 $H$ contains an odd cycle of length at most $2k-1$. 
\end{enumerate}
In particular, it follows from~\ref{it:agraphs1}, $|V(A_{k,r})|=(2k-1)(r-1)+2$,~\ref{it:agraphs2}, and~\ref{it:agraphs4} that 
$\dhom(\ccC_{2k-1})\geq \frac{1}{2k-1}$.
As $r$ can be chosen arbitrarily big.
\end{prop}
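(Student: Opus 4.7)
The plan is to establish \ref{it:agraphs1}--\ref{it:agraphs4} in order, and then deduce the lower bound by choosing $r$ large. Throughout, I set $n=(2k-1)(r-1)+2$, realise $A_{k,r}$ with vertex set $V=\{j/n:0\leq j<n\}\subseteq\RR/\ZZ$, and repeatedly use the cyclic symmetry $v\mapsto v+1/n$ which gives vertex-transitivity.

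For \ref{it:agraphs1}, I would simply count integers $j$ with $j/n\in\bigl(\tfrac{k-1}{2k-1},\tfrac{k}{2k-1}\bigr)$. Since $n/(2k-1)=(r-1)+\tfrac{2}{2k-1}$, the interval $\bigl((k-1)(r-1)+\tfrac{2(k-1)}{2k-1},\,k(r-1)+\tfrac{2k}{2k-1}\bigr)$ contains exactly the $r$ integers $(k-1)(r-1)+1,\dots,k(r-1)+1$; together with vertex-transitivity this gives $r$-regularity. For \ref{it:agraphs2}, I would lift each edge of a closed walk $v_0,v_1,\dots,v_\ell=v_0$ to a signed displacement $x_i\in\bigl(\tfrac{k-1}{2k-1},\tfrac{k}{2k-1}\bigr)\cup\bigl(-\tfrac{k}{2k-1},-\tfrac{k-1}{2k-1}\bigr)$ in $\RR$. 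Writing $p$ (resp.\ $q$) for the number of positive (resp.\ negative) displacements we have $p+q=\ell$ and $\sum x_i$ is an integer $m$ lying in an interval of length $\tfrac{\ell}{2k-1}$ centred at $\tfrac{p-q}{2}$. Hence $|2m-(p-q)|<\tfrac{\ell}{2k-1}$. When $\ell$ is odd, $2m-(p-q)$ is a nonzero integer, so $\ell>2k-1$; thus no odd cycle of length $\leq 2k-1$ exists.

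For \ref{it:agraphs3} I would use vertex-transitivity to fix one endpoint at $0$ and produce, for each other vertex $v=j/n$, an explicit closed walk of length $2k+1$ through $0$ and $v$ with no repeated vertices. The analysis of \ref{it:agraphs2} already suggests the only viable combinatorics: the $2k+1$ signed displacements must split as $k$ of one sign and $k+1$ of the other, and the total must equal a small integer; this gives enough freedom, once $r\geq2$, to choose each displacement among the $r$ admissible values (so that the vertices visited are all distinct). I expect this to be the main technical obstacle, essentially because one must verify non-revisitation over all admissible angular positions of $v$; a uniform construction proceeds by advancing through a consecutive block of neighbours at each step, with a controlled switch between positive and negative displacement to land on $v$ exactly.

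Property \ref{it:agraphs4} is then a short pigeonhole argument: a homomorphism $\phi\colon A_{k,r}\to H$ with $|V(H)|<|V(A_{k,r})|$ identifies two distinct vertices $u,v$; by \ref{it:agraphs3} these lie on a $(2k+1)$-cycle, splitting into two internally disjoint $u$--$v$ paths of lengths $t$ and $2k+1-t$, one of which is odd and at most $2k-1$. Its image is an odd closed walk in $H$ of odd length $\leq 2k-1$, hence contains an odd cycle of length $\leq 2k-1$. Finally, for the lower bound on $\dhom(\ccC_{2k-1})$, I would note that by \ref{it:agraphs1} and $|V(A_{k,r})|=(2k-1)(r-1)+2$ the normalised minimum degree $r/|V(A_{k,r})|$ tends to $\tfrac{1}{2k-1}$ as $r\to\infty$. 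Given any $\alpha<\tfrac{1}{2k-1}$ and any candidate $\ccC_{2k-1}$-free target $H$, picking $r$ large enough places $A_{k,r}$ in $\ccG_{\ccC_{2k-1}}(\alpha)$ by \ref{it:agraphs1} and \ref{it:agraphs2}, while $|V(A_{k,r})|>|V(H)|$ together with \ref{it:agraphs4} would force a short odd cycle in $H$, contradicting its $\ccC_{2k-1}$-freeness. Hence $\dhom(\ccC_{2k-1})\geq\tfrac{1}{2k-1}$.
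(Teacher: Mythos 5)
Your outline for \ref{it:agraphs1}, \ref{it:agraphs2}, \ref{it:agraphs4}, and the final deduction matches the paper in substance. For \ref{it:agraphs2} you phrase things in terms of signed lifts to $\RR$ and a parity count of $2m-(p-q)$, while the paper keeps all displacements in the interval $\bigl(\tfrac{k-1}{2k-1},\tfrac{k}{2k-1}\bigr)\subseteq\RR/\ZZ$ and observes directly that the sum over a closed walk of length $2j-1$, $j\le k$, lands in $(j-1,j)$ which misses every integer; both routes are correct and equally short. For \ref{it:agraphs4} and the lower bound your argument is the paper's (you should, as the paper does, record that $\phi(u)=\phi(v)$ forces $uv\notin E(A_{k,r})$, so the odd $u$--$v$ path has length at least $3$ and the resulting closed odd walk in $H$ is not a loop, but this is a small omission).

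The genuine gap is \ref{it:agraphs3}, and you flag it yourself: ``I expect this to be the main technical obstacle.'' What you sketch --- choose $2k+1$ signed displacements, $k$ of one sign and $k+1$ of the other, summing to a small integer, and hope there is ``enough freedom'' once $r\ge 2$ to avoid revisits for every angular position of $v$ --- is a plan for a proof, not a proof. Nothing pins down what displacements to pick, and the non-revisitation claim is exactly the part that needs an argument. The paper sidesteps all of this with a clean explicit construction: starting at $u_0=v_0$ and always stepping to the closest clockwise neighbour (step size $\tfrac{(k-1)(r-1)+1}{n}$) traces out a single closed walk of length $n$, which is a Hamiltonian cycle because $(k-1)(r-1)+1$ and $n$ are coprime; and a short computation shows the neighbours of $u_0$ are precisely $u_1,u_{(2k-1)+1},u_{2(2k-1)+1},\dots,u_{(r-1)(2k-1)+1}$, i.e.\ every $(2k-1)$-th vertex along the Hamiltonian path $u_1\dots u_{n-1}$. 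Each chord $u_0u_{s(2k-1)+1}$ together with the next chord $u_0u_{(s+1)(2k-1)+1}$ and the arc of length $2k-1$ between their endpoints closes a $C_{2k+1}$ through $u_0$; letting $s$ range over $0,\dots,r-2$ (this is where $r\ge 2$ enters) these $C_{2k+1}$'s cover all of $u_1,\dots,u_{n-1}$, and vertex-transitivity finishes. To make your proposal complete you would need to supply something equally concrete in place of the ``controlled switch'' heuristic.
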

\begin{proof}
For given integers $k\geq 2$ and $r\geq 1$ set 
\[
	n=|V(A_{k,r})|=(2k-1)(r-1)+2
\]
and let $v_0,\dots, v_{n-1}$ be the vertices of $A_{k,r}$ in cyclic order, i.e., we assume 
$v_i\equiv i/n\in\RR/\ZZ$ for every $i=0,\dots,n-1$. By definition of $A_{k,r}$ the neighbourhood
of $v_0$ is contained in the open interval $\big(\frac{k-1}{2k-1},\frac{k}{2k-1}\big)\subseteq \RR/\ZZ$.
Consequently,
\begin{equation}\label{eq:ANv0}
	N(v_0)=\{v_i\colon i=(k-1)(r-1)+1,\dots, k(r-1)+1\}
\end{equation}
and part~\ref{it:agraphs1} follows by symmetry.

For part~\ref{it:agraphs2} we observe that for any closed walk $u_1\dots u_\l u_1$ of length $\l$ 
in $A_{k,r}$ we have $(u_\l-u_1)+\sum_{i=1}^{\l-1}(u_i-u_{i+1})=0$ and 
owing to the definition of $A_{k,r}$ each term of that sum lies in 
$\big(\frac{k-1}{2k-1},\frac{k}{2k-1}\big)\subseteq \RR/\ZZ$.
However, for every integer $j=2,\dots,k$ we have	
\[
	(j-1)\leq (2j-1)\frac{k-1}{2k-1}<(2j-1)\frac{k}{2k-1}\leq j\,.
\]
Consequently, 
$(u_\l-u_1)+\sum_{i=1}^{\l-1}(u_i-u_{i+1}) \in (j-1, j)$. 
Since $0 \notin (j-1, j)$, no walk in $A_{k,r}$ of length $2j-1$ for $j\leq k$ can be 
closed and part~\ref{it:agraphs2} follows.

For part~\ref{it:agraphs3} we show below that starting in $u_0=v_0$ and always choosing the closest 
clockwise neighbour in $A_{k,r}$, i.e., 
setting
\begin{equation}\label{eq:AdefC}
	u_j\equiv u_{j-1}+\frac{(k-1)(r-1)+1}{n}\equiv j\frac{(k-1)(r-1)+1}{n}\in\RR/\ZZ\,,
\end{equation}
defines a Hamiltonian cycle $C=u_0\dots u_{n-1}u_0$ 
with the property that
\[
	u_1\,,\quad u_{(2k-1)+1}\,,\quad u_{2(2k-1)+1}\,,\quad\dots\,,\quad u_{(r-1)(2k-1)+1}=u_{n-1}
\] 
are the $r$ neighbours of $u_0=v_0$ in $A_{k,r}$. 
In other words, every 
$(2k-1)$-th 
vertex on the 
subpath $u_1\dots u_{n-1}$ of the Hamiltonian cycle~$C$ is a neighbour of $u_0$.
Considering the~$C_{2k+1}$'s created by the 
chords between $u_0$ and its neighbours $u_{(2k-1)+1},\dots, u_{(r-2)(2k-1)+1}$ shows that 
$u_0=v_0$ lies on a cycle of length $2k+1$ with every other vertex of $A_{k,\l}$, which by symmetry 
verifies part~\ref{it:agraphs3}.

It is left to show that the~$C$ defined above, has the desired properties
, i.e. is Hamiltonian with the stated distribution of $N(v_{0})$
. It follows from the 
definition of $C$ that $u_{n-1}u_0$ and $u_iu_{i+1}$ are edges of $A_{k,r}$ 
for every $i=0,\dots,n-2$ and, hence, $C$ is a closed walk of length~$n$.
However, since 
\[
	n=(2k-1)(r-1)+2=2\big((k-1)(r-1)+1\big)+(r-1)
\]
and $(k-1)(r-1)+1$ are relatively prime, it follows that $C$ is indeed a Hamiltonian cycle.
Moreover, we observe
for $s=0,\dots,r-1$
that
\begin{align*}
	u_{s(2k-1)+1}
	&\overset{\eqref{eq:AdefC}}{\equiv} 
	(s(2k-1)+1)\frac{(k-1)(r-1)+1}{n}\\
	&\overset{\hphantom{\eqref{eq:AdefC}}}{\equiv}
	(s(2k-1)+1)\frac{(k-1)(r-1)+1}{(2k-1)(r-1)+2}\\
	&\overset{\hphantom{\eqref{eq:AdefC}}}{\equiv}
	\frac{(k-1)(r-1)+1+s}{(2k-1)(r-1)+2}+s(k-1)\\
	&\overset{\hphantom{\eqref{eq:AdefC}}}{\equiv}
	\frac{(k-1)(r-1)+1+s}{n}
	\equiv v_{(k-1)(r-1)+1+s}\overset{\eqref{eq:ANv0}}{\in} N(v_0)\,,
\end{align*}
which shows the 
stated
distribution of $N(v_0)$ on~$C$.

Finally, assertion~\ref{it:agraphs4} is a direct consequence of part~\ref{it:agraphs3}. Suppose 
$\phi\colon A_{k,r}\to H$ is a graph homomorphism with $|V(H)|<n$. Then there are two vertices 
$x$, $y\in V(A_{k,r})$ such that $\phi(x)=\phi(y)$. In particular $xy\not\in E(A_{k,r})$ and in 
view of~\ref{it:agraphs3} the vertex $\phi(x)=\phi(y)$ must be contained in a closed odd walk of 
length at most $2k-1$ in~$H$ and, consequently,~$H$ contains an odd cycle of length at most $2k-1$.
\end{proof}

\section{Dense graphs without odd cycles}
\label{sec:preparations}
In this section we collect a few observation on local properties of graphs with high minimum degree and without an odd cycle of given length.

The main result of this section is the proof of Proposition \ref{prop:CDfree}, which gives some structural information on such graphs by excluding long odd cycles and pairs of odd cycles connected by a path of length~$4$.

We remark that in the following lemmas and in 
Proposition~\ref{prop:CDfree} the additional $\eps n$ term in the minimum degree condition could be replaced by 
some polynomial in~$k$. However, since we do not strive for the optimal condition in these auxiliary results, 
we chose to state them with the same assumption as in Theorem~\ref{thm:main}. We also remark that by the \emph{length} of a path or more generally the length of a walk, we refer to the number of edges, where each 
edge is counted with its multiplicity.
In particular, we denote by $P_{r}$ the path on $r + 1$ vertices.

\begin{lemma}\label{lem:N}
Let $k\geq 2$, $\eps>0$, and let $G=(V,E)$ be a $C_{2k-1}$-free graph satisfying 
$|V|=n \geq 4k/\eps$ 
and $\delta(G)\geq \big(\frac{1}{2k-1}+\eps\big)n$.
	\begin{enumerate}[label=\rmlabel]
	    \item\label{lem:thinN} For every vertex $v \in V$ we have 
$d(M) := 2e(M)/|M| < 2k$ for all $M \subseteq N(v)$.
		\item\label{lem:disjointN} 
For every two vertices $v, u \in V$, i
		f there is an odd $v$-$u$-path of length at most $2k-3$ in $G$, then $u$ and $v$ have less than $5k^{2}$ common neighbours in $G$.
	\end{enumerate}	
\end{lemma}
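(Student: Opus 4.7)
\textbf{Part \ref{lem:thinN}.}
I would argue by contradiction via the Erd\H{o}s--Gallai theorem on paths. Suppose some $M \subseteq N(v)$ has $d(M) \geq 2k$, so $e(G[M]) \geq k|M| > \tfrac{(2k-4)|M|}{2}$. Erd\H{o}s--Gallai then supplies a path $x_0 x_1 \cdots x_{2k-3}$ of length $2k-3$ inside $G[M]$, whose endpoints $x_0, x_{2k-3}$ lie in $N(v)$ and are distinct from $v$ (as $v \notin M$). Adjoining the edges $v x_0$ and $v x_{2k-3}$ therefore yields a $C_{2k-1}$ in $G$, contradicting the hypothesis.

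\textbf{Part \ref{lem:disjointN}.}
Let $P = v\, p_1 \cdots p_{\ell-1}\, u$ denote the given odd $v$-$u$-path of length $\ell \leq 2k-3$, write $W = N(u) \cap N(v)$, and assume for contradiction that $|W| \geq 5k^2$. Set $m = (2k - 1 - \ell)/2 \in \{1, \ldots, k-1\}$. The plan is to construct an internally $P$-disjoint $v$-$u$-path
\[
    Q \;=\; v,\, w_m,\, z_{m-1},\, w_{m-1},\, \ldots,\, z_1,\, w_1,\, u
\]
of length $2m = 2k - 1 - \ell$, where $w_1, \ldots, w_m \in W \setminus V(P)$ are distinct common neighbours of $u, v$ and $z_1, \ldots, z_{m-1}$ are distinct bridge vertices lying outside $V(P) \cup \{w_1, \ldots, w_m\}$. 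Together with $P$, such a $Q$ forms a cycle of length $2m + \ell = 2k - 1$, contradicting the $C_{2k-1}$-freeness of $G$.

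I would build $Q$ greedily. The case $m = 1$ reduces to picking any $w_1 \in W \setminus V(P)$, which exists since $|W| \geq 5k^2$ exceeds $|V(P)| \leq 2k - 2$. For $m \geq 2$, after selecting $w_1, \ldots, w_i$ and $z_1, \ldots, z_{i-1}$, the forbidden set of previously used vertices has size at most $2k$; I must produce $z_i \in N(w_i)$ outside this set which also has a neighbour in $W$ outside the forbidden set (to serve as $w_{i+1}$). The existence of such a $z_i$ follows from a Cauchy--Schwarz estimate: since $\sum_{y \in V} d_W(y) \geq |W|\,\delta(G) \geq \tfrac{5k^2}{2k-1}\,n$, many vertices of $V$ carry several $W$-neighbours, and the minimum-degree hypothesis on $w_i$ forces $N(w_i)$ to meet this set. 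Part~\ref{lem:thinN}, applied as $e(G[W]) < k|W|$, also keeps the $W$-degrees from being concentrated inside $W$ itself, so the averaging genuinely produces bridges $z_i$ outside $W$.

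The main obstacle is propagating the averaging across all $m - 1 \leq k - 2$ greedy steps while avoiding the $O(k)$ accumulating forbidden vertices at each step. The factor of $5$ in the bound $|W| \geq 5k^2$ (rather than the tight value around $k$) is chosen precisely to supply enough slack here, so that the greedy construction goes through without further case analysis.
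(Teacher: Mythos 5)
\textbf{Part~\ref{lem:thinN}.} Your argument is essentially identical to the paper's: both invoke the Erd\H{o}s--Gallai theorem on paths inside $G[M]$ and close the path through $v$ to produce a $C_{2k-1}$. (You use the sharp form $e(G)>\tfrac{(\ell-2)n}{2}\Rightarrow P_{\ell}\subseteq G$, while the paper's Theorem~\ref{thm:EG-P}\,\ref{thm:EG-Pnormal} is a slightly weakened version; both suffice here.) This part is fine.

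\textbf{Part~\ref{lem:disjointN}.} The target object is the same as the paper's --- an even $v$--$u$ path of length $2k-1-\ell$ alternating between common neighbours and ``bridge'' vertices --- but your construction of it has a genuine gap. The critical claim is that \emph{``the minimum-degree hypothesis on $w_i$ forces $N(w_i)$ to meet this set''} of vertices with several $W$-neighbours. This does not follow from the degree counting you sketch. The set $S=\{y:d_W(y)\geq 2\}$ can be shown to have size at least roughly $\delta(G)-n/|W|$, and $|N(w_i)|\geq\delta(G)$; but $2\delta(G)\approx\tfrac{2n}{2k-1}<n$ for every $k\geq 2$, so $N(w_i)$ and $S$ need not intersect. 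Worse, the naive set $\{y:d_W(y)\geq 1\}$ is useless here because \emph{every} $y\in N(w_i)$ trivially has $w_i$ itself as a $W$-neighbour --- what you actually need is a $y\in N(w_i)$ with a $W$-neighbour \emph{different} from $w_i$ and outside the accumulated forbidden set, and nothing in the averaging estimate rules out that all of $N(w_i)\setminus F$ has $W$-degree exactly $1$. So the greedy step can get stuck after one move, and the ``slack from the factor $5$'' you appeal to does not obviously resolve this. (It is possible that a more careful greedy with a well-chosen starting vertex $w_1$ can be made to work, but the sketch as written does not justify it.)

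The paper sidesteps this local difficulty with a global extremal argument: it fixes $A\subseteq N(v)\cap N(u)\setminus V(P)$ of size exactly $4k^2$, sets $B=N(A)\setminus(A\cup V(P))$, and uses part~\ref{lem:thinN} to bound $e(A)$, giving $e(A,B)>2kn$. The bipartite Erd\H{o}s--Gallai theorem (Theorem~\ref{thm:EG-P}\,\ref{thm:EG-Pbipartite}) then produces an even path $P_{2k-2}$ in $G[A,B]$ with endpoints in $A$, from which even $A$--$A$ paths of every shorter length can be extracted, and one of these closes the cycle with $P$. I suggest replacing your greedy construction with this bipartite Erd\H{o}s--Gallai step; it cleanly produces the whole even path at once.
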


In the proof of Lemma \ref{lem:N} we will use the following consequence of the Erd\H os-Gallai theorem on paths~\cite{EG59}, also stated in \ref{lem:disjointN}, as well as Theorem 1 of \ref{lem:disjointN}.

\begin{thm}\label{thm:EG-P}{\emph{(Erd\H os \& Gallai 1959)}}
	\begin{enumerate}[label=\rmlabel]
	    \item\label{thm:EG-Pnormal} Let $G$ be an $n$-vertex graph. If $e(G) \geq \frac{1}{2}kn$, then $G$ contains a path with $k$ vertices.
		\item\label{thm:EG-Pbipartite} Let $G = (A, B, E)$ be a bipartite graph with $|A| \geq |B| \geq k$. If $e(A, B) > (|A| + |B|)k$, then $G$ contains an even path of length $k$.
	\end{enumerate}	
\end{thm}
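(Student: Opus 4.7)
Part (i) is the classical Erd\H os--Gallai theorem on paths, and the plan is to prove it by induction on $n$, with part (ii) following as a short corollary. Note first that the hypothesis $e(G)\geq kn/2$ together with $e(G)\leq\binom{n}{2}$ forces $n\geq k+1$; in the base case $n=k+1$ the hypothesis already forces $G=K_{k+1}$, which trivially contains a path on $k$ vertices. For the inductive step I split on the minimum degree: if some vertex $v$ satisfies $\deg(v)\leq\lfloor k/2\rfloor$, then $e(G-v)\geq k(n-1)/2$ and the conclusion follows by applying the induction hypothesis to $G-v$.

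The substantive case is $\delta(G)>k/2$. By a standard averaging argument some connected component $C$ of $G$ still satisfies $e(C)\geq k|C|/2$, so I may assume $G$ itself is connected (with $|V(G)|\geq k+1$). Let $P=v_0 v_1\cdots v_\ell$ be a longest path in $G$, and suppose for contradiction that $\ell\leq k-2$. By maximality of $P$ both endpoints have all their neighbours on $P$, and since $\deg(v_0),\deg(v_\ell)>k/2>\ell/2$, the two index sets $\{i\colon v_0 v_i\in E\}$ and $\{i\colon v_{i-1}v_\ell\in E\}$ are subsets of $\{1,\dots,\ell\}$ of size $>\ell/2$, so pigeonhole yields some $i$ with $v_0 v_i,\,v_{i-1}v_\ell\in E$. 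Splicing $P$ along these two chords produces a cycle of length $\ell+1$ spanning $V(P)$, and since $G$ is connected with more than $\ell+1$ vertices, some vertex outside $P$ is adjacent to this cycle; rotating the cycle to place that neighbour at one endpoint of a Hamilton path of the cycle then extends $P$ by one vertex, contradicting its maximality.

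Part (ii) follows from part (i) applied to the underlying graph (with $n=|A|+|B|$) with parameter $2k$ in place of $k$: the hypothesis $e(A,B)>(|A|+|B|)k=\tfrac{(2k)n}{2}$ yields a path on $2k$ vertices, i.e.\ of length $2k-1$, in $G$. Because $G$ is bipartite, any contiguous sub-path of even length has both endpoints in the same part, so extracting a sub-path of length $k$ (which must be even for the phrase ``even path of length $k$'' to be meaningful) provides the desired even path.

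The main technical step is the P\'osa-type rotation argument in the case $\delta(G)>k/2$ of part (i), which combines pigeonhole on the neighbourhoods of the endpoints of a longest path with the use of connectedness to extend the resulting cycle. Once this step is in place, the inductive bookkeeping in part (i) and the reduction from part (ii) to part (i) are both essentially routine.
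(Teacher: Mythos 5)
Your proof is correct, but note that the paper does not actually prove this statement: part~(i) is quoted from Erd\H{o}s and Gallai and part~(ii) is the bipartite variant due to Gy\'arf\'as, Rousseau, and Schelp (the sentence introducing the theorem contains garbled references, but that is the intended source), so there is no argument in the paper to compare yours against. What you supply for part~(i) is the standard self-contained proof: induction on $n$, deleting a vertex of degree at most $\lfloor k/2\rfloor$, and otherwise running a P\'osa-type rotation on a longest path inside a component $C$ with $e(C)\ge k|C|/2$ (which forces $|C|\ge k+1$, so the spanning cycle on the longest path can be extended by connectivity); all the estimates check out, and indeed the same argument yields the sharp Erd\H{o}s--Gallai threshold $e(G)>\tfrac{(k-2)n}{2}$, so the stated hypothesis has slack. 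Your deduction of (ii) from (i) with parameter $2k$ is also legitimate precisely because the stated bipartite bound $e(A,B)>(|A|+|B|)k$ is much weaker than the sharp Gy\'arf\'as--Rousseau--Schelp bound: a path on $2k$ vertices in a bipartite graph contains a sub-path of any even length at most $2k-1$ with both ends in a prescribed colour class, which covers the only way the statement is used later (in the proof of Lemma~\ref{lem:N} one extracts paths $P_{2\ell}$ with both ends in $A$). That your argument never uses the hypothesis $|A|\ge|B|\ge k$ is consistent with this slack and is not a defect.
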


\begin{proof}
Assertion~\ref{lem:thinN} is a direct consequence of Theorem \ref{thm:EG-P} \ref{thm:EG-Pnormal}. 
Indeed
, it implies that 
$d(M)\geq 2k$ yields a copy of $P_{2k-3}$ in $M\subseteq N(v)$, which together with $v$ would form a cycle~$C_{2k-1}$ in $G$.

For the proof of \ref{lem:disjointN} assume for a contradiction that 
$|N(v) \cap N(u)| \geq 5k^{2}$, 
and there is an odd $v$-$u$-path $P$ of length at most $2k-3$.
Let $A' = (N(v) \cap N(u)) \backslash V(P)$, clearly, $|A'| \geq 4k^{2}$ so let $A \subseteq A'$ be a subset of $A'$ with exaktly $4k^{2}$ vertices and $B = N(A) \backslash (A\cup V(P))$.
Since  every vertex in $A$ has at most $2k-2 < 2k$ neighbours in $P$ we have
\[
e(A, B)
	\geq 
|A|\cdot\delta(G) - 2e(A) - |A|\cdot 2k
	\overset{\text{\ref{lem:thinN}}}{>} 
|A| \bigg(\frac{1}{2k-1}n + \epsilon n - 4k\bigg) 
	\geq
\frac{4k^{2}}{2k-1} n
	> 
	2k \cdot n\,.
\] 
Consequently, $|B| > 2k$ and Theorem \ref{thm:EG-P} \ref{thm:EG-Pbipartite} yields a $P_{2k-2}$ in $G[A,B]$ and, hence,
for every $\l\in[k-2]$ there exists a $P_{2\l}$ in  $G[A,B]$ with end vertices in $A$. Together with the path $P$ this yields a cycle $C_{2k-1}$
in $G$, which is a contradiction to the assumption that $G$ is $C_{2k-1}$-free.
\end{proof}

Lemma~\ref{lem:N} yields the following corollary, which asserts
that the first and the second neighbourhoods of a short odd cycle 
cover the ``right'' proportion of vertices.

\vbox{
\begin{lemma}\label{lem:shortC}
Let $k\geq 2$, $\eps>0$, and let $G=(V,E)$ be a $C_{2k-1}$-free graph satisfying $|V|=n \geq 20k^3/\eps$ 
and $\delta(G)\geq \big(\frac{1}{2k-1}+\eps\big)n$. If $C = c_{1}\dots c_{\l}c_1$ is an odd cycle 
of length $\l < 2k-1$ in $G$, then  for every $i\in[\l]$ there are subsets 
$M_{i} \subseteq N(c_{i})\sm V(C)$, vertices $m_{i} \in M_{i}$, and subsets $L_{i} \subseteq N(m_{i})\sm V(C)$
such that the sets $M_{1}, \dots, M_{\l}, L_{1}, \dots, L_{\l}$ 
are mutually disjoint and each of those sets contains at least $\frac{1}{2k-1}n$ vertices.
\end{lemma}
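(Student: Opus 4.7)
The plan is to build the sets $M_i$ first and then, for arbitrary choices of $m_i\in M_i$, the sets $L_i$, relying on part~\ref{lem:disjointN} of Lemma~\ref{lem:N} to control all relevant pairwise overlaps of neighbourhoods. The key preliminary observation is that $C$ has odd length $\ell<2k-1$, hence $\ell\leq 2k-3$, so any two distinct vertices $c_i,c_j$ on $C$ are joined along $C$ by an odd path of length at most $\ell-2\leq 2k-5$ and also by an even path of length at most $\ell-1\leq 2k-4$. This will guarantee that every pair whose common neighbourhood needs to be bounded admits an odd joining path of length at most $2k-3$, so that part~\ref{lem:disjointN} of Lemma~\ref{lem:N} bounds their common neighbourhood by $5k^2$.

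For the first step I set
\[
M_i := N(c_i)\setminus\Big(V(C)\cup\bigcup_{j\in[\ell]\sm\{i\}}N(c_j)\Big)
\]
for every $i\in[\ell]$. These sets are pairwise disjoint by construction, and combining the minimum degree assumption with the $5k^2$ bound applied to each pair $c_i,c_j$ yields
\[
|M_i|\geq\delta(G)-\ell-(\ell-1)\cdot 5k^2\geq\tfrac{n}{2k-1}+\eps n-2k-10k^3,
\]
which exceeds $\tfrac{n}{2k-1}$ since $n\geq 20k^3/\eps$.

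Now I pick any $m_i\in M_i$ and set
\[
L_i := \bigl(N(m_i)\setminus V(C)\bigr)\setminus\Big(\bigcup_{j\in[\ell]}M_j\cup\bigcup_{j\in[\ell]\sm\{i\}}N(m_j)\Big).
\]
By construction $L_i\subseteq N(m_i)\setminus V(C)$, every $L_i$ is disjoint from every $M_j$, and the $L_i$'s are pairwise disjoint (since $L_j\subseteq N(m_j)$). To estimate $|L_i|$ from below I exhibit a short odd joining path in each needed case: from $m_i$ to $c_i$ the edge itself (length $1$); from $m_i$ to $c_j$ with $j\neq i$ the concatenation of $m_ic_i$ with the even $c_i$-$c_j$-path along $C$ (total length at most $\ell\leq 2k-3$); and from $m_i$ to $m_j$ with $j\neq i$ the concatenation of $m_ic_i$, the odd $c_i$-$c_j$-path along $C$, and $c_jm_j$ (total length at most $2k-3$). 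Then part~\ref{lem:disjointN} of Lemma~\ref{lem:N} gives $|N(m_i)\cap N(c_j)|<5k^2$ for every $j$ and $|N(m_i)\cap N(m_j)|<5k^2$ for every $j\neq i$; using $M_j\subseteq N(c_j)$ and summing the losses produces $|L_i|\geq\delta(G)-\ell-(2\ell-1)\cdot 5k^2\geq\tfrac{n}{2k-1}$, again invoking $n\geq 20k^3/\eps$ together with $\ell\leq 2k-3$.

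I expect the main obstacle to be the parity bookkeeping: for each of the three flavours of pairs ($c_i$-$c_j$, $m_i$-$c_j$, $m_i$-$m_j$) one must verify that the natural concatenation of short pieces along $C$ and across the pendant edges $m_ic_i$ has the right parity \emph{and} fits within the length budget $2k-3$. The binding case is $m_i$-$m_j$, which uses essentially all of the slack and is where the hypothesis $\ell\leq 2k-3$ (as opposed to $\ell\leq 2k-1$) enters critically.
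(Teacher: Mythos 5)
Your proof is correct and follows essentially the same strategy as the paper: both constructions rest entirely on Lemma~\ref{lem:N}~\ref{lem:disjointN} together with the observation that any two vertices among $\{c_i\}\cup\{m_i\}$ are joined by an odd path of length at most $\ell\leq 2k-3$, which caps each pairwise common neighbourhood at $5k^2$ and lets one trim the $M_i$'s and $L_i$'s into mutual disjointness while staying above $\tfrac{n}{2k-1}$. The only cosmetic difference is that you define the disjoint sets directly by subtracting unions, whereas the paper discards overlaps in two successive passes (first $N(m_i)\cap N(m_j)$, then $N(c_i)\cap N(m_j)$); the bounds and the path-parity bookkeeping are the same.
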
}

\begin{proof}
Let $C = c_{1} \dots c_{\l}c_1$ be an odd cycle of length $l$ in $G = (V,E)$, where $l < 2k - 1$.
Since there is a path of odd length at most $\l - 2 < 2k-3$ between any two vertices of~$C$, 
Lemma~\ref{lem:N}~\ref{lem:disjointN} tells us
, that
 $|N(c_{i}) \cap N(c_{j})| < 5k^{2}$ for all distinct~$i$, $j \in [\l]$.
Consequently, we may discard up to at most $(\l-1) \cdot 5k^{2} + \l<10k^3$ vertices from the neighbourhoods~$N(c_{i})$ 
and obtain mutually disjoint sets $M_i\subseteq N(c_i)\sm V(C)$
of size at least 
\[
	\delta(G) - 10k^3 
	\geq
	\frac{1}{2k-1}n + \epsilon n - 10k^3
	> 
	\frac{1}{2k-1}n\,.
\]
For every $i\in[\l]$ fix an arbitrary vertex $m_{i} \in M_{i}$.
Since there is a path of odd length at most $\l - 2 < 2k-3$ between any two vertices of $C$, 
there is a path of odd length at most $(\l - 2) + 2 = \l \leq 2k-3$ between any two 
vertices $m_{i}$ and $m_{j}$. Again we infer from Lemma~\ref{lem:N}~\ref{lem:disjointN} that $|N(m_{i}) \cap N(m_{j})| < 5k^{2}$ for all distinct~$i$, $j \in [\l]$
and in the same way as before, we obtain mutually disjoint sets $L'_{i} \subseteq N(m_{i})\setminus V(C)$ 
of size at least $\delta(G) - 10k^{3}$.

Furthermore, since there also is a path of even length at most $\l - 1 < 2k-3$ between any two (not necessarily distinct) 
vertices of~$C$, 
there is a path of odd length at most $(\l - 1) + 1 = \l \leq 2k-3$ between any pair of vertices 
$c_{i}$ and $m_{j}$. 
Again Lemma~\ref{lem:N}~\ref{lem:disjointN} implies that $|N(c_{i}) \cap N(m_{j})| < 5k^{2}$ for all $i$, $j \in [\l]$
and discarding at most $\l \cdot 5k^{2}<10k^3$ vertices from 
each $L'_{i}$ yields sets $L_{i} \subseteq N(m_{i})$ such that $M_1,\dots,M_\l,L_1,\dots,L_\l$ are mutually disjoint 
and disjoint from $V(C)$.
Moreover,  the assumption $n \geq 20k^{3}/\epsilon$ implies
\[
	|L_i| 
	\geq 
	|L'_i|-10k^3
	\geq
	\delta(G) - 20k^3  
	\geq
	\frac{1}{2k-1}n + \epsilon n - 20k^3 
	\geq 
	\frac{1}{2k-1}n\,,
\]
which concludes the proof of the lemma.
\end{proof}

In the proof of part~\ref{it:mthm:1} of Theorem~\ref{thm:main} it will be useful to exclude 
the graphs described in Definition \ref{def:DL}
as subgraphs 
of a $C_{2k-1}$-free graph of sufficiently high minimum degree.
\begin{dfn}\label{def:DL}
We denote by  $D_{\l}$ the graph on $2\l+3$ vertices that consist of two disjoint cycles of length $\l$ 
and a path of length~$4$ joining these two cycles, which is internally disjoint to both cycles.
\end{dfn}

The following proposition excludes the appearance of some short odd cycles and $D_\l$'s in 
the graphs $G$ considered in Theorem~\ref{thm:main}.

\begin{prop}\label{prop:CDfree}
	Let $k\geq 2$, $\eps>0$, and $G=(V,E)$ be a $C_{2k-1}$-free graph satisfying
	 $|V|=n \geq 20k^3/\eps$ 
	and $\delta(G)\geq \big(\frac{1}{2k-1}+\eps\big)n$. Then  
	\begin{enumerate}[label=\rmlabel]
		\item\label{prop:Cfree} $G$ is $C_\l$-free for every odd $\l$ with $k \leq \l \leq 2k-1$
.
		\item\label{prop:Dfree} $G$ is $D_\l$-free for every odd $\l$ with $\max\{3,2k-7\} \leq \l \leq 2k-1$.
	\end{enumerate}	
\end{prop}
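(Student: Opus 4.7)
For part~\ref{prop:Cfree}, I would argue by contradiction using Lemma~\ref{lem:shortC}. Suppose $G$ contains an odd cycle $C$ of length $\l$ with $k\leq\l\leq 2k-1$; the case $\l=2k-1$ is excluded by the hypothesis, so one may assume $\l\leq 2k-3$. Then Lemma~\ref{lem:shortC} yields $2\l$ pairwise disjoint subsets of $V\sm V(C)$, each of size at least $n/(2k-1)$. Since $\l\geq k$ gives $2\l\geq 2k>2k-1$, these sets already cover strictly more than $n$ vertices, contradicting $|V|=n$.

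For part~\ref{prop:Dfree}, the plan is first to reduce to a short list of small cases. If $\l\geq k$ then any $D_\l\subseteq G$ contains a $C_\l$, which is ruled out by part~\ref{prop:Cfree} (or by the hypothesis when $\l=2k-1$). Combining $\l\geq\max\{3,2k-7\}$ with $\l<k$ forces $k\leq 6$ and leaves exactly the three pairs $(k,\l)\in\{(4,3),(5,3),(6,5)\}$ to be handled.

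For each of these, assume $D_\l\subseteq G$ with cycles $C$, $C'$ joined by a path $P=p_0p_1p_2p_3p_4$, and apply Lemma~\ref{lem:shortC} to both cycles. This produces $4\l$ subsets $M_i$, $L_i$, $M'_j$, $L'_j$ of $V$, each of size at least $n/(2k-1)$ and disjoint within each cycle's family. A direct check gives $4\l>2k-1$ in each of the three cases, so full cross-cycle disjointness would already force $|V|>n$. To bound the cross-cycle overlaps one invokes Lemma~\ref{lem:N}\ref{lem:disjointN}: for a pair of centres $(u,u')$ with $u$ on the $C$-side and $u'$ on the $C'$-side, I would construct a short odd $u$-$u'$ path of length at most $2k-3$ by walking from $u$ along $C$ to $p_0$, through $P$, and along $C'$ to $u'$, correcting parity either by going the long way around one of the cycles or by using an $m_i$-edge as a length-$1$ detour. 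The assumption $\l\geq 2k-7$ is exactly what keeps these concatenated walks within length $2k-3$. Summing the resulting $5k^2$-overlap bounds over the $O(\l^2)$ cross-pairs costs only $O(\l^2 k^2)$ vertices, which is negligible under $n\geq 20k^3/\eps$, and the counting contradiction survives.

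The step I expect to be the main obstacle is handling the few exceptional cross-pairs where parity and length cannot both be corrected by such a concatenation; the prototypical case is $(p_0,p_4)$, joined in $D_\l$ only by the even-length path $P$. For these one either finds an alternative short odd path through the expansion vertices $m_i$ or $m'_j$, or argues directly that a sufficiently large $|N(c_i)\cap N(c'_j)|$ would, together with a suitable $D_\l$-path of odd length $2k-3$, yield a $C_{2k-1}$ in $G$ and so contradict the hypothesis. Since only three $(k,\l)$-pairs are left, this reduces to a bounded case check combining the structure of $D_\l$ with the expansions around both cycles.
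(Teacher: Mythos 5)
Your argument for part~\ref{prop:Cfree} is the same as the paper's, and your reduction in part~\ref{prop:Dfree} to the three pairs $(k,\l)\in\{(4,3),(5,3),(6,5)\}$ also matches exactly. The problem is with the counting strategy you propose for those three cases; it cannot be made to work, and the obstacle you flag at the end is much more severe than ``a few exceptional cross-pairs''.

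Applying Lemma~\ref{lem:shortC} to both cycles produces two families $A=M_1\cup\dots\cup M_\l\cup L_1\cup\dots\cup L_\l$ and $B=M'_1\cup\dots\cup L'_\l$, each of size at least $\tfrac{2\l}{2k-1}n$. In each of the three cases $\tfrac{2\l}{2k-1}>\tfrac12$, so by inclusion--exclusion $|A\cap B|\geq |A|+|B|-n\geq\big(\tfrac{4\l}{2k-1}-1\big)n$, which is $\tfrac{5}{7}n$, $\tfrac{1}{3}n$ and $\tfrac{9}{11}n$ respectively. This overlap is a \emph{forced} consequence of the lower bound in Lemma~\ref{lem:shortC} alone; it is not a quantity you can beat down to $O(\l^2k^2)$ by summing $5k^2$-bounds over cross-pairs. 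Equivalently, in any putative $D_\l\subseteq G$ there must exist cross-pairs whose common neighbourhoods have size $\Omega(n)$, so Lemma~\ref{lem:N}\ref{lem:disjointN} cannot apply to them, and indeed the required odd paths genuinely do not exist: in $D_3$ every $c_1$--$c'_1$ path is even, and for the pairs $(m_i,m'_j)$ the shortest odd path has length $9$ when $(k,\l)\in\{(4,3),(5,3)\}$ and length $11$ when $(k,\l)=(6,5)$, both exceeding $2k-3$. Your fallback --- that a large $|N(c_i)\cap N(c'_j)|$ plus a $D_\l$-path of odd length $2k-3$ yields a $C_{2k-1}$ --- also fails for $(c_1,c'_1)$ precisely because no odd path of that length is available.

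The paper's proof avoids this trap by not trying to make all $4\l$ sets approximately disjoint. For $(6,5)$ it applies Lemma~\ref{lem:shortC} to one cycle only, then fixes the single vertex $c'_5$, checks that $c'_5$ reaches all $2\l$ centres by odd paths of length at most $2k-3$, and concludes that $\deg(c'_5)$ is too small. For $(5,3)$ and $(4,3)$ this degree argument is not strong enough on its own, so the paper first shows that $c_1$ and $c'_1$ (respectively, some $c_i$ and $c'_j$) must have many common neighbours, uses one of these to replace the length-$4$ connecting path by a length-$2$ path, obtaining an auxiliary configuration $D'_3$, and then reruns the degree argument with the now shorter odd distances. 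If you want to complete your plan you will need to import something of this kind; the naive ``bound every cross-pair overlap'' route is blocked for structural reasons, not just because a handful of pairs are awkward.
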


\begin{proof}
Assertion~\ref{prop:Cfree} is a direct consequence of Lemma~\ref{lem:shortC}, as the mutually disjoint sets~$M_1,\dots,M_\l,L_1,\dots,L_\l$
would not fit into~$V(G)$.

For the proof of assertion~\ref{prop:Dfree} we assume for a contradiction that $G=(V,E)$ contains 
a subgraph~$D_{\l}$ for some odd $\l$ with $\max\{3,2k-7\} \leq \l \leq 2k-1$. Since 
the 
graph
$D_{\l}$ contain a cycle of length $\l$, we immediately infer from part~\ref{prop:Cfree}, 
that we may assume $\l < k$. Consequently, $k > \l \geq  2k-7$ implies $k\leq 6$
and owing to $k>\l\geq \max\{3,2k-7\}$ we see that the only remaining cases 
we have to consider are $(k,\l)\in\{(4,3),(5,3),(6,5)\}$. We discuss each of the cases below.
\smallskip

\noindent
\textbf{Case $k=6$ and $\l=5$.}
Let $C=c_1\dots c_5c_1$ and $C'=c'_1\dots c'_5c'_1$ be the two cycles of length~$5$ appearing in 
$D_{5} \subseteq G$ and suppose the path $P$ of length~$4$ connects $c^{}_{1}$ and $c'_{1}$. We observe that 
$c'_5$ is connected to every vertex of $C$ by an odd path of length at most~$9$, as seen in Figure \ref{fig:D5k6}. 
In fact, $Q=c'_5P$ connects 
$c'_5$ and $c^{}_1$ by a path of length~$5$ and every other vertex of $C$ can be reached by an even path of length
at most~$4$ from~$c_1$. 

Furthermore, $c'_5$ is connected to every vertex in $N(C)$ by an odd path of length at most~$9$. For the vertices 
in $N(C)\sm N(c_1)$ we again follow the 
path
~$Q$ and since $c_2,c_3,c_4$, and $c_5$ can be reached by an odd path of length at most~$3$
from~$c_1$, as seen in Figure \ref{fig:D5k6}, every vertex in $N(C)\sm N(c_1)$ can be reached by an odd path of length at most $5+3+1=9$. For the vertices in 
$N(c_1)$ we utilise the path of length~$4$ from $c'_5$ to $c'_1$ in $C'$. Continuing then along $P$ to $c_1$ shows that there are 
paths of length~$9$ connecting $c'_5$ with every vertex in~$N(c_1)$.

As 
$9=2k-3$, we infer from Lemma~\ref{lem:N}~\ref{lem:disjointN} that $c'_5$ has at most 
$10\cdot (5k^2  + |Q|) < 10k^3$ neighbours in
the sets 
$M_1,\dots,M_5,L_1,\dots,L_5$
 given by Lemma~\ref{lem:shortC} applied to~$C$. However, since
\[
	\big|M_1\dcup\dots\dcup M_5\dcup L_1\dcup\dots\dcup L_5\big| \geq \frac{10}{11}n
\]
this implies $\deg(c'_5)\leq \frac{n}{11}+10k^3<\frac{n}{11}+\eps n$ by the 
assumption that 
$n > 20k^3/\eps$, 
which contradicts the minimum degree assumption on $G$ in this case.
\smallskip

\begin{figure}[ht]
\centering
\begin{tikzpicture}
	\draw [line width=12pt, red!80!white, opacity=0.2, rounded corners=5pt, line cap=round] 
		($(-5,0)+(144:2)$) -- ($(-5,0)+(72:2)$) -- ($(-5,0)+(0:2)$) -- ($(5,0)+(180:2)$) -- ($(5,0)+(108:2)$);
	\draw [line width=12pt, blue!80!white, opacity=0.2, rounded corners=5pt, line cap=round] 
		($(-5,0)+(144:2)$) -- ($(-5,0)+(216:2)$) -- ($(-5,0)+(288:2)$) -- ($(-5,0)+(0:2)$) -- ($(5,0)+(180:2)$) -- ($(5,0)+(108:2)$);
	\foreach \x in {1,2,...,5}{
 		\fill (-5,0)+(72-\x*72:2) circle (2.8pt);		
		\draw[very thick] ($(-5,0)+(72-\x*72:2)$) -- ($(-5,0)+(-\x*72:2)$);
		\fill (5,0)+(108+\x*72:2) circle (2.8pt);
		\draw[very thick] ($(5,0)+(108+\x*72:2)$) -- ($(5,0)+(180+\x*72:2)$);
	}	
	\foreach \x in {2,...,5}{
		\node at ($(-5,0)+(72-\x*72:2.45)$) {$c^{}_\x$};
		\node at ($(5,0)+(108+\x*72:2.47)$) {$c'_\x$};
	}
	\node at ($(-5,0)+(10:2.3)$) {$c^{}_1$};
	\node at ($(5,0)+(167:2.3)$) {$c'_1$};
	
	\fill (-1.5,0) circle (2.8pt);
	\fill (0,0) circle (2.8pt);
	\fill (1.5,0) circle (2.8pt);
	
	\draw[very thick]  ($(-5,0)+(0:2)$) -- ($(5,0)+(180:2)$);
\end{tikzpicture}
\caption{An odd path of length $7$ from $c'_{5}$ to $c^{}_{4}$ in red 
	and an even path of length $8$ from $c'_{5}$ to $c^{}_{4}$ in blue as used 
	in the proof of case $k=6$ and $\l=5$.\label{fig:D5k6}}
\end{figure}
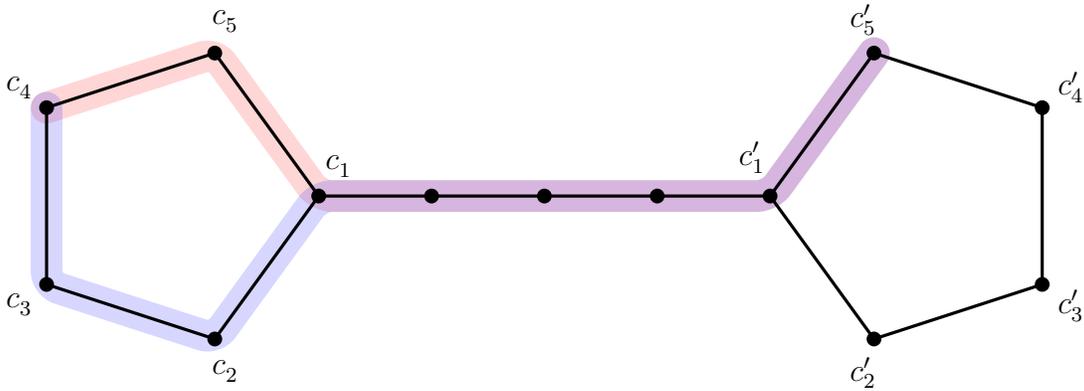

\noindent
\textbf{Case $k=5$ and $\l=3$.}
Let $C=c^{}_1c^{}_2c^{}_3c^{}_1$ and $C'=c'_1c'_2c'_3c'_1$ be the two triangles of $D_3\subseteq G$ and 
suppose the path of length~$4$ connects $c^{}_{1}$ and $c'_{1}$. Moreover, Lemma~\ref{lem:shortC} applied with~$C$ yields 
vertices $m_1,m_2,m_3$ and vertex sets $M_1,M_2,M_3$ and $L_1,L_2,L_3$. It is easy to check that $c'_2$ and $c'_3$
can reach each $c^{}_i$ and $m^{}_i$ for every $i\in[3]$ by an odd path of length at most $7=2k-3$, as seen in Figure \ref{fig:D3k5} on the left
. In view of Lemma~\ref{lem:N}~\ref{lem:disjointN}, and since $|N(c_{2}')|, |N(c_{3}')| \geq \delta(G) \geq n/9$ it follows that

\[
	\big|M_1\cup M_2\cup M_3\cup L_1\cup L_2\cup L_3\cup N(c'_2)\cup N(c'_3)\big|\geq \frac{8}{9}n\,.
\]
Consequently, we infer from $|N(c'_1)|\geq \delta(G)\geq n/9+\eps n>n/9+40k^2$ that the vertex $c'_1$ must have 
at least $5k^2$ common neighbours with one of the eight vertices $c^{}_{1}$, $c^{}_{2}$, $c^{}_{3}$, $m^{}_{1}$, $m^{}_{2}$, $m^{}_{3}$, $c'_{2}$, $c'_{3}$.
Since $c'_1$ can be connected by 
an odd path
of length at most $7$ to all of these eight vertices but~$c_1$, we infer that
$c_1$ and $c'_1$ have $5k^2$ common neighbours and we can fix such a neighbour disjoint from $m_1$, $m_2$, $m_3$, $C$ and $C'$.
In other words, we found a graph~$D'_3$ consisting of~$C$,~$C'$, and a path of length~$2$ between~$c^{}_1$ and $c'_1$.
Consequently, $c'_2$ and $c'_3$ are connected to each $c^{}_i$ and each $m^{}_i$ for every~$i\in[3]$ 
by an odd path of 
length at most~$5$. Hence, we can fix a neighbour $m'_2$ of $c'_2$, which can be connected to each $c^{}_i$ and 
each~$m^{}_i$
for $i\in[3]$ and to~$c'_2$ and $c'_3$ by an odd path of length at most~$7$, as seen in Figure~\ref{fig:D3k5} on the right. In other words, any two of the 9 vertices from 
$c^{}_1,c^{}_2,c^{}_3,m^{}_1,m^{}_2,m^{}_3,c'_2,c'_3$ and~$m'_2$ are connected by an odd path of length at most~$7$
and 
thus 
have 
fewer than 
$5k^2$ common neighbours by Lemma~\ref{lem:N}~\ref{lem:disjointN}.
However, since $\eps n>40k^2$ the minimum degree assumption implies that at least one of the former 8 vertices must have at least $5k^2$ common neighbours with $m'_{2}$.

\begin{figure}[ht]\centering
\begin{tikzpicture}
\node[fill=black,                         circle, inner sep=0pt, minimum size=5pt, name=c1]{};
\node[fill=black, above left  = 8mm of  c1, circle, inner sep=0pt, minimum size=5pt, name=c2]{};
\node[fill=black, below left  = 8mm of  c1, circle, inner sep=0pt, minimum size=5pt, name=c3]{};
\node[fill=black,       right = 5mm of  c1, circle, inner sep=0pt, minimum size=5pt, name=p1]{};
\node[fill=black,       right = 5mm of  p1, circle, inner sep=0pt, minimum size=5pt, name=p2]{};
\node[fill=black,       right = 5mm of  p2, circle, inner sep=0pt, minimum size=5pt, name=p3]{};
\node[fill=black,       right = 5mm of  p3, circle, inner sep=0pt, minimum size=5pt, name=cc1]{};
\node[fill=black, above right = 8mm of cc1, circle, inner sep=0pt, minimum size=5pt, name=cc2]{};
\node[fill=black, below right = 8mm of cc1, circle, inner sep=0pt, minimum size=5pt, name=cc3]{};

\node[above = 1mm of cc1] {$c'_{1}$};

\draw[black] (c1) -- (c2) -- (c3) -- (c1);
\draw[black] (c1) -- (p1) -- (p2) -- (p3) -- (cc1);
\draw[black] (cc1) -- (cc2) -- (cc3) -- (cc1);

\node[draw=black, below = 2mm of c1, ellipse, inner sep=0pt, minimum width=16pt, minimum height=8pt, name=nc1]{};
\node[draw=black, below = 2mm of nc1, ellipse, inner sep=0pt, minimum width=16pt, minimum height=8pt, name=nnc1]{};
\node[draw=black, left = 2mm of c2, ellipse, inner sep=0pt, minimum width=8pt, minimum height=16pt, name=nc2]{};
\node[draw=black, left = 2mm of nc2, ellipse, inner sep=0pt, minimum width=8pt, minimum height=16pt, name=nnc2]{};
\node[draw=black, left = 2mm of c3, ellipse, inner sep=0pt, minimum width=8pt, minimum height=16pt, name=nc3]{};
\node[draw=black, left = 2mm of nc3, ellipse, inner sep=0pt, minimum width=8pt, minimum height=16pt, name=nnc3]{};

\node[draw=black, right = 2mm of cc2, ellipse, inner sep=0pt, minimum width=8pt, minimum height=16pt, name=ncc2]{};
\node[draw=black, right = 2mm of cc3, ellipse, inner sep=0pt, minimum width=8pt, minimum height=16pt, name=ncc3]{};

\node[fill=black, below= 2.5mm of c1, circle, inner sep=0pt, minimum size=5pt, name=m1]{};
\node[fill=black, left = 2.5mm of c2, circle, inner sep=0pt, minimum size=5pt, name=m2]{};
\node[fill=black, left = 2.5mm of c3, circle, inner sep=0pt, minimum size=5pt, name=m3]{};

\draw[black] ($(nc2) + (0mm,2.8mm)$) -- (c2) -- ($(nc2) - (0mm,2.8mm)$);
\draw[black] ($(nnc2) + (0mm,2.8mm)$) -- (m2) -- ($(nnc2) - (0mm,2.8mm)$);
\draw[black] ($(nc3) + (0mm,2.8mm)$) -- (c3) -- ($(nc3) - (0mm,2.8mm)$);
\draw[black] ($(nnc3) + (0mm,2.8mm)$) -- (m3) -- ($(nnc3) - (0mm,2.8mm)$);
\draw[black] ($(nc1) + (2.8mm,0mm)$) -- (c1) -- ($(nc1) - (2.8mm,0mm)$);
\draw[black] ($(nnc1) + (2.8mm,0mm)$) -- (m1) -- ($(nnc1) - (2.8mm,0mm)$);
\draw[black] ($(ncc2) + (0mm,2.8mm)$) -- (cc2) -- ($(ncc2) - (0mm,2.8mm)$);
\draw[black] ($(ncc3) + (0mm,2.8mm)$) -- (cc3) -- ($(ncc3) - (0mm,2.8mm)$);
\end{tikzpicture}
\qquad
\begin{tikzpicture}
\node[fill=black, circle, inner sep=0pt, minimum size=5pt, name=c1]{};
\node[fill=black, above left = 8mm of c1, circle, inner sep=0pt, minimum size=5pt, name=c2]{};
\node[fill=black, below left = 8mm of c1, circle, inner sep=0pt, minimum size=5pt, name=c3]{};
\node[fill=white, right = 5mm of c1, circle, inner sep=0pt, minimum size=5pt, name=p1]{};
\node[fill=black, right = 5mm of p1, circle, inner sep=0pt, minimum size=5pt, name=p2]{};
\node[fill=white, right = 5mm of p2, circle, inner sep=0pt, minimum size=5pt, name=p3]{};
\node[fill=black, right = 5mm of p3, circle, inner sep=0pt, minimum size=5pt, name=cc1]{};
\node[fill=black, above right = 8mm of cc1, circle, inner sep=0pt, minimum size=5pt, name=cc2]{};
\node[fill=black, below right = 8mm of cc1, circle, inner sep=0pt, minimum size=5pt, name=cc3]{};

\draw[black] (c1) -- (c2) -- (c3) -- (c1);
\draw[black] (c1) -- (p2) -- (cc1);
\draw[black] (cc1) -- (cc2) -- (cc3) -- (cc1);

\node[draw=black, below = 2mm of c1, ellipse, inner sep=0pt, minimum width=16pt, minimum height=8pt, name=nc1]{};
\node[draw=black, below = 2mm of nc1, ellipse, inner sep=0pt, minimum width=16pt, minimum height=8pt, name=nnc1]{};
\node[draw=black, left = 2mm of c2, ellipse, inner sep=0pt, minimum width=8pt, minimum height=16pt, name=nc2]{};
\node[draw=black, left = 2mm of nc2, ellipse, inner sep=0pt, minimum width=8pt, minimum height=16pt, name=nnc2]{};
\node[draw=black, left = 2mm of c3, ellipse, inner sep=0pt, minimum width=8pt, minimum height=16pt, name=nc3]{};
\node[draw=black, left = 2mm of nc3, ellipse, inner sep=0pt, minimum width=8pt, minimum height=16pt, name=nnc3]{};

\node[draw=black, right = 2mm of cc2, ellipse, inner sep=0pt, minimum width=8pt, minimum height=16pt, name=ncc2]{};
\node[draw=black, right = 2mm of cc3, ellipse, inner sep=0pt, minimum width=8pt, minimum height=16pt, name=ncc3]{};

\node[fill=black, below= 2.5mm of c1, circle, inner sep=0pt, minimum size=5pt, name=m1]{};
\node[fill=black, left = 2.5mm of c2, circle, inner sep=0pt, minimum size=5pt, name=m2]{};
\node[fill=black, left = 2.5mm of c3, circle, inner sep=0pt, minimum size=5pt, name=m3]{};

\draw[black] ($(nc2) + (0mm,2.8mm)$) -- (c2) -- ($(nc2) - (0mm,2.8mm)$);
\draw[black] ($(nnc2) + (0mm,2.8mm)$) -- (m2) -- ($(nnc2) - (0mm,2.8mm)$);
\draw[black] ($(nc3) + (0mm,2.8mm)$) -- (c3) -- ($(nc3) - (0mm,2.8mm)$);
\draw[black] ($(nnc3) + (0mm,2.8mm)$) -- (m3) -- ($(nnc3) - (0mm,2.8mm)$);
\draw[black] ($(nc1) + (2.8mm,0mm)$) -- (c1) -- ($(nc1) - (2.8mm,0mm)$);
\draw[black] ($(nnc1) + (2.8mm,0mm)$) -- (m1) -- ($(nnc1) - (2.8mm,0mm)$);
\draw[black] ($(ncc2) + (0mm,2.8mm)$) -- (cc2) -- ($(ncc2) - (0mm,2.8mm)$);
\draw[black] ($(ncc3) + (0mm,2.8mm)$) -- (cc3) -- ($(ncc3) - (0mm,2.8mm)$);

\node[fill=black, right= 2.5mm of cc2, circle, inner sep=0pt, minimum size=5pt, name=mm2]{};
\node[below = 1mm of mm2] {$m'_{2}$};
\end{tikzpicture}

\caption{On the left the graph $D_{3}$ of 
the
case $k=5$ and $\l=3$ where the vertex $c'_{1}$ does not have enough neighbours, and on the right the graph $D'_{3}$ where the vertex $m'_{2}$ does not have enough neighbours.\label{fig:D3k5}}
\end{figure}
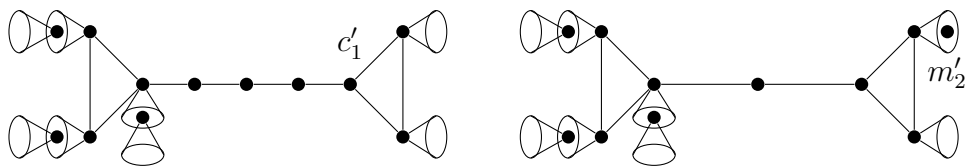

\noindent
\textbf{Case $k=4$ and $\l=3$.}
Again we consider the two triangles $C^{} = c^{}_1c^{}_2c^{}_3c^{}_1$ and $C' = c'_1c'_2c'_3c'_1$ of $D_3\subseteq G$ and assume $c^{}_1$ and $c'_1$ are connected by a path $c^{}_1p^{}_1p^{}_2p^{}_3c'_1$ of length~$4$.
We consider the vertices $m^{}_1$, $m^{}_2$, $m^{}_3$ and sets $M^{}_{1},M^{}_{2},M^{}_{3}$
, $L^{}_{1},L^{}_{2},L^{}_{3}$ 
and $M'_{1},M'_{2},M'_{3}$ given by Lemma~\ref{lem:shortC} applied with $C$ and with $C'$.

Note that there can only be one edge between a vertex of $C^{}$ and a vertex of $C'$, namely~$c^{}_{1}c'_{1}$, otherwise there is a $C_{7}$ in $D_{3}$.
Therefore, if there are vertices $c^{}_{i}$ and $c'_{j}$ with~$i, j \in [3]$ such that they have at least two common neighbours,~$G$ contains a graph $D'_{3}$ consisting of~$C$,~$C'$ and a path of length~$2$ between $c^{}_{i}$ and~$c'_{j}$.
By symmetry, we may assume $i = j = 1$.
However, in this case we see that $c'_2$ is connected to $c^{}_1,c^{}_2,c^{}_3$ and $m^{}_1,m^{}_2,m^{}_3$ by an odd path of length at most~$5$, as seen in Figure \ref{fig:D3k4} on the right
. Since
\[
	\big| M^{}_{1} \cup M^{}_{2} \cup M^{}_{3} \cup L^{}_{1} \cup L^{}_{2} \cup L^{}_{3} \big|
	\geq 
	\frac{6}{7}n\,,
\]
 the minimum degree assumption yields at least $(\eps n - 4)/6\geq 5k^2$ common neighbours of $c'_{2}$ and one of the vertices of $\{c^{}_{1}, c^{}_{2}, c^{}_{3}, m^{}_{1}, m^{}_{2}, m^{}_{3}\}$, which is a contradiction to Lemma~\ref{lem:N}~\ref{lem:disjointN}.

Assuming that no two vertices of $C^{}$ and $C'$ have more than one common neighbour, we notice that $p^{}_1$ can be connected to all three vertices of $C$ and to all three vertices of $C'$ by an odd path of length at most $5 = 2k-3$, as seen in Figure \ref{fig:D3k4} on the left.
Which implies that
\[
	\big|M^{}_1\cup M^{}_2\cup M^{}_3\cup M'_1\cup M'_2\cup M'_3\big|
	\geq 
	\frac{6}{7}n-9\,.
\]
Consequently, the minimum degree assumption yields at least 
$(\eps n - 9 - 9)/6\geq 5k^2$ 
common neighbours of $p^{}_1$ 
and 
one of the vertices of 
$C^{}_1$ or $C'_1$, which is a contradiction to Lemma~\ref{lem:N}~\ref{lem:disjointN}.
\begin{figure}[ht]\centering
\begin{tikzpicture}
\node[fill=black, circle, inner sep=0pt, minimum size=5pt, name=c1]{};
\node[fill=black, above left = 8mm of c1, circle, inner sep=0pt, minimum size=5pt, name=c2]{};
\node[fill=black, below left = 8mm of c1, circle, inner sep=0pt, minimum size=5pt, name=c3]{};
\node[fill=black, right = 5mm of c1, circle, inner sep=0pt, minimum size=5pt, name=p1]{};
\node[fill=black, right = 5mm of p1, circle, inner sep=0pt, minimum size=5pt, name=p2]{};
\node[fill=black, right = 5mm of p2, circle, inner sep=0pt, minimum size=5pt, name=p3]{};
\node[fill=black, right = 5mm of p3, circle, inner sep=0pt, minimum size=5pt, name=cc1]{};
\node[fill=black, above right = 8mm of cc1, circle, inner sep=0pt, minimum size=5pt, name=cc2]{};
\node[fill=black, below right = 8mm of cc1, circle, inner sep=0pt, minimum size=5pt, name=cc3]{};

\node[above = 1mm of p1] {$p_{1}$};

\draw[black] (c1) -- (c2) -- (c3) -- (c1);
\draw[black] (c1) -- (p1) -- (p2) -- (p3) -- (cc1);
\draw[black] (cc1) -- (cc2) -- (cc3) -- (cc1);

\node[draw=black, below = 2mm of c1, ellipse, inner sep=0pt, minimum width=16pt, minimum height=8pt, name=nc1]{};
\node[draw=black, left = 2mm of c2, ellipse, inner sep=0pt, minimum width=8pt, minimum height=16pt, name=nc2]{};
\node[draw=black, left = 2mm of c3, ellipse, inner sep=0pt, minimum width=8pt, minimum height=16pt, name=nc3]{};

\node[draw=black, below = 2mm of cc1, ellipse, inner sep=0pt, minimum width=16pt, minimum height=8pt, name=ncc1]{};
\node[draw=black, right = 2mm of cc2, ellipse, inner sep=0pt, minimum width=8pt, minimum height=16pt, name=ncc2]{};
\node[draw=black, right = 2mm of cc3, ellipse, inner sep=0pt, minimum width=8pt, minimum height=16pt, name=ncc3]{};

\draw[black] ($(nc2) + (0mm,2.8mm)$) -- (c2) -- ($(nc2) - (0mm,2.8mm)$);
\draw[black] ($(nc3) + (0mm,2.8mm)$) -- (c3) -- ($(nc3) - (0mm,2.8mm)$);
\draw[black] ($(nc1) + (2.8mm,0mm)$) -- (c1) -- ($(nc1) - (2.8mm,0mm)$);
\draw[black] ($(ncc2) + (0mm,2.8mm)$) -- (cc2) -- ($(ncc2) - (0mm,2.8mm)$);
\draw[black] ($(ncc3) + (0mm,2.8mm)$) -- (cc3) -- ($(ncc3) - (0mm,2.8mm)$);
\draw[black] ($(ncc1) + (2.8mm,0mm)$) -- (cc1) -- ($(ncc1) - (2.8mm,0mm)$);
\end{tikzpicture}
\qquad
\begin{tikzpicture}
\node[fill=black,                         circle, inner sep=0pt, minimum size=5pt, name=c1]{};
\node[fill=black, above left = 8mm of c1, circle, inner sep=0pt, minimum size=5pt, name=c2]{};
\node[fill=black, below left = 8mm of c1, circle, inner sep=0pt, minimum size=5pt, name=c3]{};
\node[fill=white, right = 5mm of c1, circle, inner sep=0pt, minimum size=5pt, name=p1]{};
\node[fill=black, right = 5mm of p1, circle, inner sep=0pt, minimum size=5pt, name=p2]{};
\node[fill=white, right = 5mm of p2, circle, inner sep=0pt, minimum size=5pt, name=p3]{};
\node[fill=black,       right = 5mm of  p3, circle, inner sep=0pt, minimum size=5pt, name=cc1]{};
\node[fill=black, above right = 8mm of cc1, circle, inner sep=0pt, minimum size=5pt, name=cc2]{};
\node[fill=black, below right = 8mm of cc1, circle, inner sep=0pt, minimum size=5pt, name=cc3]{};

\node[above = 1mm of cc2] {$c'_{2}$};

\draw[black] (c1) -- (c2) -- (c3) -- (c1);
\draw[black] (c1) -- (p2) -- (cc1);
\draw[black] (cc1) -- (cc2) -- (cc3) -- (cc1);

\node[draw=black, below = 2mm of c1, ellipse, inner sep=0pt, minimum width=16pt, minimum height=8pt, name=nc1]{};
\node[draw=black, below = 2mm of nc1, ellipse, inner sep=0pt, minimum width=16pt, minimum height=8pt, name=nnc1]{};
\node[draw=black, left = 2mm of c2, ellipse, inner sep=0pt, minimum width=8pt, minimum height=16pt, name=nc2]{};
\node[draw=black, left = 2mm of nc2, ellipse, inner sep=0pt, minimum width=8pt, minimum height=16pt, name=nnc2]{};
\node[draw=black, left = 2mm of c3, ellipse, inner sep=0pt, minimum width=8pt, minimum height=16pt, name=nc3]{};
\node[draw=black, left = 2mm of nc3, ellipse, inner sep=0pt, minimum width=8pt, minimum height=16pt, name=nnc3]{};

\node[fill=black, below= 2.5mm of c1, circle, inner sep=0pt, minimum size=5pt, name=m1]{};
\node[fill=black, left = 2.5mm of c2, circle, inner sep=0pt, minimum size=5pt, name=m2]{};
\node[fill=black, left = 2.5mm of c3, circle, inner sep=0pt, minimum size=5pt, name=m3]{};

\draw[black] ($(nc2) + (0mm,2.8mm)$) -- (c2) -- ($(nc2) - (0mm,2.8mm)$);
\draw[black] ($(nnc2) + (0mm,2.8mm)$) -- (m2) -- ($(nnc2) - (0mm,2.8mm)$);
\draw[black] ($(nc3) + (0mm,2.8mm)$) -- (c3) -- ($(nc3) - (0mm,2.8mm)$);
\draw[black] ($(nnc3) + (0mm,2.8mm)$) -- (m3) -- ($(nnc3) - (0mm,2.8mm)$);
\draw[black] ($(nc1) + (2.8mm,0mm)$) -- (c1) -- ($(nc1) - (2.8mm,0mm)$);
\draw[black] ($(nnc1) + (2.8mm,0mm)$) -- (m1) -- ($(nnc1) - (2.8mm,0mm)$);
\end{tikzpicture}

\caption{On the left the graph $D_{3}$ of 
the
case $k=4$ and $\l=3$ where the vertex $p_{1}$ does not have enough neighbours, and on the right the graph $D'_{3}$ where the vertex $c'_{2}$ does not have enough neighbours.\label{fig:D3k4}}
\end{figure}
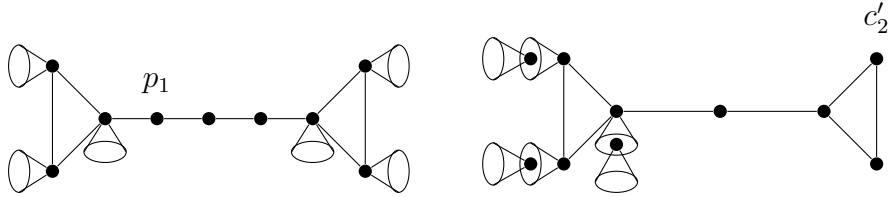
\end{proof}

\section{Upper bounds for Theorem~\ref{thm:main}}
\label{sec:main}

\begin{proof}[Proof of Theorem~\ref{thm:main}]
We first prove assertion \ref{it:mthm:1} of Theorem \ref{thm:main}.
Given a sufficiently large $C_{2k-1}$-free $n$-vertex graph $G = (V, E)$ with $\delta(G) \geq (\frac{1}{2k-1} + \epsilon)n$ for 
$k \geq 3$ and $\epsilon > 0$, 
it suffices to show
that there 
exists a $C_{2k-1}$-free graph $H$ with $|V(H)| \leq K = K(k, \epsilon)$ and $G\ahom H$.
The required graph $H(C_{2k-1},\alpha)$ for Definition~\ref{def:dhom} can then be taken 
to be 
the disjoint 
union of all non-isomorphic $C_{2k-1}$-free graphs on $K$ vertices.

In particular, the constant $K$ must be independent of $n$. Without loss of generality we may assume that $2/\eps$ is an integer.
In order to define $K$, consider the function $f\colon \RR\to \RR$ with $x\mapsto x2^x$ and set 
\begin{equation}\label{eq:defmK}
m=\max \left\lbrace \left \lceil \frac{2\ln(3/\epsilon)}{\epsilon^2} \right \rceil, 8k^2 \right\rbrace
	\qqand
	K = \underbrace{\,f\circ f\circ\dots \circ f}_{\text{$2k$-times}}\big((2/\epsilon + 1)^{\binom{m}{4k}}\big)\,,
\end{equation}
i.e., $K$ is given by a $2(k+1)$-times iterated exponential function in $\textrm{poly}(1/\eps,k)$.

Considering a random $m$-element subsets $X \subseteq V$, it follows from the concentration of the hypergeometric distribution (see e.g.~\cite{JLR}*{inequality (2.6) and Theorem~2.10}) for any fixed vertex $v\in V$
\[
	\PP \big(|N(v) \cap X| \leq (\tfrac{1}{2k-1} + \eps)m - t\big) \leq \exp\big(-\tfrac{t^{2}}{2m}\big)\,,
\]
for every $t > 0$.
Since our choice of $m$ in~\eqref{eq:defmK} yields $m/2k > 4k$ it follows with $t = \eps m$, that there exists a set $X$ of size~$m$, such that all but at most $\epsilon n/3$ vertices of $G$ have at least $4k$ neighbours in $X$.
We fix such a set $X = \{x_{1}, \dots, x_{m}\}$ and set 
\[Y = \big\{v \in V \colon |N(v) \cap X| \geq 4k\big\}\,.\]

For every $y \in Y$ fix a set $X(y)$ of exactly $4k$ neighbours of $y$ in $X$ in an arbitrary way.
We partition $Y$ into $\binom{m}{4k}$ sets, where two vertices $y$, $y'\in Y$ belong to the 
same partition class if $X(y) = X(y')$.
Removing all the classes with 
fewer than 
$8k/\epsilon$ vertices from this partition yields a partition $\cQ$ of a subset of $Y$ of size 
\begin{equation}\label{eq:subQ}
	\left| \bigcup \cQ \right| \geq |Y| - \binom{m}{4k} \frac{8k}{\epsilon} 
	\geq 
	\left(n - \frac{\epsilon}{3} n\right) -  \binom{m}{4k} \frac{8k}{\epsilon} 
	> 
	n - \frac{\epsilon}{2} n\,,
\end{equation}
where the last inequality holds for sufficiently large $n$. For convenience we may index the partition classes of $\cQ$ by a suitable set 
$I=[M]$ with $M\leq \binom{m}{4k}$, i.e., $\cQ= (Q_{i})_{i \in I}$.

Next we define a partition $\cR$ of the whole vertex set $V$, based on the neighbourhoods with respect to the partition classes of $\cQ$.
More precisely we assign to each vertex $v \in V$ a vector $\mu(v) = (\mu_{i}(v))_{i \in I}$, where $\mu_{i}(v)$ equals the proportion of vertices in 
$Q_{i}$ 
that are neighbours of $v$ ``rounded down'' to the next integer multiple of $\epsilon/2$, i.e. 
\begin{equation}\label{eq:mui}
\mu_{i}(v) = \left \lfloor \frac{|N(v) \cap Q_{i}|}{|Q_{i}|} \cdot \frac{2}{\epsilon} \right \rfloor \cdot \frac{\epsilon}{2}\,.
\end{equation}
In particular, since every class from $\cQ$ has at least $8k/\eps$ vertices, we have 
\begin{equation}\label{eq:lbQi}
	\big|N(v)\cap Q_i\big|
	\geq 
	4k
\end{equation}
for every $v\in V$ with $\mu_i(v)>0$.

We now define the partition $\cR$.
The classes of $\cR$ are given by the equivalence classes of the relation $\mu_i(v) = \mu_i(v')$ for every $i\in I$.
Owing to the discretisation of $\mu_{i}(v)$ the partition~$\cR$ has at most 
\[(2/\epsilon + 1)^{|I|} \leq (2/\epsilon + 1)^{\binom{m}{4k}}\] 
parts. Furthermore, we note 
\begin{align}
	\sum_{i \in I} \mu_{i}(v)|Q_{i}| 
	&\overset{\phantom{\eqref{eq:subQ}}}{\geq} d(v) - \left| V\backslash\bigcup \cQ \right| - \sum_{i \in I} \frac{\epsilon}{2} |Q_{i}| \nonumber\\ 
	&\overset{\eqref{eq:subQ}}{>} \left(\frac{1}{2k-1} + \epsilon\right) n - \frac{\epsilon}{2} n 
- \frac{\epsilon}{2} n \nonumber\\
	&\overset{\phantom{\eqref{eq:subQ}}}{\geq} \left( \frac{1}{2k-1} \right) n\label{eq:muQ}
\end{align}
for every $v\in V$. For later reference we make the following observation.

\begin{clm}\label{clm:commonneighbours}
For every $i \in I$ no two distinct vertices $v, v' \in V$ with $\mu_i(v), \mu_i(v') > 0$ are joined by an odd $v$-$v'$-path of length at most $2k-5$ in $G$.
\end{clm}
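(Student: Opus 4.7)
The plan is to argue by contradiction: suppose there exist distinct $v, v' \in V$ with $\mu_i(v), \mu_i(v') > 0$ joined by some odd $v$-$v'$-path $P$ of length $\ell$ with $1 \leq \ell \leq 2k - 5$, and derive a copy of $C_{2k-1}$ in $G$, contradicting the hypothesis that $G$ is $C_{2k-1}$-free. The key structural fact to exploit is that, by the construction of $\cQ$, every vertex of $Q_i$ is adjacent to all $4k$ vertices of the common set $X(Q_i) \subseteq X$, while inequality~\eqref{eq:lbQi} guarantees $|N(v) \cap Q_i|, |N(v') \cap Q_i| \geq 4k$.

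I would close $P$ into a $C_{2k-1}$ by attaching a second $v$-$v'$-path $P'$ of even length $2k - 1 - \ell$, internally disjoint from $P$, built as an alternating walk through $Q_i$ and $X(Q_i)$. Setting $t = (2k - 3 - \ell)/2$, which is a positive integer since $\ell$ is odd with $\ell \leq 2k - 5$, the path would take the form
\[
P' = v\, u_1\, x_1\, u_2\, x_2\, \dots\, x_t\, u_{t+1}\, v',
\]
where $u_1 \in N(v) \cap Q_i$, $u_{t+1} \in N(v') \cap Q_i$, $u_2, \dots, u_t \in Q_i$, and $x_1, \dots, x_t \in X(Q_i)$. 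All edges $u_j x_j$ and $x_j u_{j+1}$ are automatic, since every vertex of $Q_i$ is adjacent to every vertex of $X(Q_i)$.

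A greedy selection produces these vertices: first pick distinct $x_1, \dots, x_t$ from $X(Q_i) \setminus V(P)$; then pick $u_1$ and $u_{t+1}$ from $N(v) \cap Q_i$ and $N(v') \cap Q_i$ respectively, avoiding $V(P)$ and the $x_j$'s; and finally pick $u_2, \dots, u_t$ one at a time from $Q_i$ avoiding all previously chosen vertices and $V(P)$. Since $|V(P)| \leq 2k - 4$, $|X(Q_i)| = 4k$, $|N(v) \cap Q_i|, |N(v') \cap Q_i| \geq 4k$ and $|Q_i| \geq 8k/\epsilon$, while at each step one needs to avoid only $O(k)$ already-selected vertices, each selection succeeds. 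The resulting cycle $P \cup P'$ has length $\ell + (2t + 2) = 2k - 1$, contradicting that $G$ is $C_{2k-1}$-free. The only mild subtlety is that all chosen vertices must be distinct so that the closed walk is actually a cycle, but given the comfortable slack in the counts above this is pure bookkeeping rather than a genuine obstacle.
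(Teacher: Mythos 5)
Your proposal is correct and matches the paper's argument essentially verbatim: both extend the given odd $v$-$v'$-path to a $C_{2k-1}$ by attaching an alternating path through $Q_i$ and the common $4k$-element neighbourhood $X(Q_i)\subseteq X$, using \eqref{eq:lbQi} to guarantee enough choices in $N(v)\cap Q_i$ and $N(v')\cap Q_i$ to keep all vertices distinct. The only difference is cosmetic parametrization ($t$ versus the paper's $\ell$).
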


\begin{proof}
Suppose for a contradiction, that for some $i\in I$ and $v\neq v'$ we have $\mu_i(v),\mu_i(v') > 0$ and there is an odd $v$-$v'$-path~$P$ of length at most $2k-5$ in $G$.
Let~$q_{i}$ be a neighbour of~$v$ in~$Q_i$ and let $q_{i}'$ be a neighbour of $v'$ in $Q_i$, 
such that $q_{i} \neq q_{i}'$ and 
both not contained in~$P$
(see~\eqref{eq:lbQi}).
Consequently, there is a $q_i$-$q'_i$-path $P'\subseteq G$ of odd length $2k-1-2\l$ for some~$\l\in[k-2]$.

Since all vertices of $Q_i$ have $4k$ common neighbours in $X$, 
there is a set $X'$ consisting of~$\l$ of these neighbours from $X\setminus V(P')$. 
Similarly, there is a set $Q'_i\subseteq Q_i$ of $\l-1$ vertices in $Q_i\setminus (V(P')\cup X')$. Clearly, $X'\cup Q'_i\cup\{q_i,q'_i\}$ 
spans a $q_i$-$q'_i$-path $P''$ of length $2\l$, which together with $P'$ yields a copy of $C_{2k-1}$ in $G$.
This, however, contradicts the assumption that $G$ is $C_{2k-1}$-free.
\end{proof}

Starting with 
the 
partition $\cR^{0} = \cR$ we inductively refine this partition $2k$ times and obtain partitions 
$\cR^{0} \succcurlyeq \cR^{1} \succcurlyeq \dots \succcurlyeq \cR^{2k}$. 
Given $\cR^{i}$ we define $\cR^{i+1}$ by subdividing every partition class 
such that vertices remain in the same class if and only if they have neighbours in the same classes of $R^{i}$.
More precisely, two vertices $v$, $v'$ from some partition class of $\cR^i$ stay in the same class in $\cR^{i+1}$ 
if and only if 
for every class $R^i_j$ from $\cR^i$ we have 
\[
	N(v)\cap R^i_j\neq\emptyset  
	\quad\Longleftrightarrow\quad
	N(v')\cap R^i_j\neq\emptyset\,.
\]

Owing to this inductive process and our choice of $K$ in~\eqref{eq:defmK} the partition 
$\cR^{2k}$ consists of at most $K$ classes.
Since $k \geq 3$, claim~\ref{clm:commonneighbours} 
implies that the classes of $\cR^{0}$ are independent sets in $G$ 
and, therefore, also the classes of $\cR^{2k}$ are independent.
Hence, we may define the reduced graph~$H$ of $\cR^{2k}$, where each class 
$\cR^{2k}$ is a vertex of $H$ and two vertices are adjecent, if the
corresponding partition classes induce at least one crossing edge in $G$.  
Obviously, we have 
\begin{equation}\label{eq:goals}
	G\ahom  H
	\qqand
	|V(H)|\leq K
\end{equation}
and it is left to show that $H$ is also $C_{2k-1}$-free (see Claim~\ref{clm:C2k-1}). For the proof of this property 
we first collect a few observations concerning the interplay of odd paths in~$H$ and walks in~$G$ (see Claims~\ref{clm:HtoQ} and~\ref{clm:WtoP}).

Denote by $\cR^{i}(v)$ the unique class of the partition $\cR^{i}$ which contains the vertex $v \in V$.
Similarly, for $j \geq i$ let $\cR^{i}(R)$ be the unique class of the partition $\cR^{i}$ which is a superset of~$R \in \cR^{j}$.

\begin{clm}\label{clm:HtoQ}
If there is a walk $W_H = h_{1} h_{2} \dots h_{s}$ in $H$ for some integer $s \leq 2k$, 
then there are vertices $w_{i} \in \cR^{2k-i+1}(h_{i})\subseteq \cR^0(h_i)$ for every $i\in[s]$ 
such that $W = w_{1} w_{2} \dots w_{s}$ is a walk in $G$. 
Moreover, $w_1$ can be chosen arbitrarily in $h_1=\cR^{2k}(h_1)$.
\end{clm}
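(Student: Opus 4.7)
My plan is a direct induction on $i$ that constructs the vertices $w_1,\dots,w_s$ one by one, using at each step the way the partition $\cR^{2k-i+1}$ refines $\cR^{2k-i}$.

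\medskip

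For the base case set $w_1$ to be the prescribed arbitrary vertex of $h_1=\cR^{2k}(h_1)$; this clearly satisfies $w_1\in\cR^{2k-1+1}(h_1)=\cR^{2k}(h_1)$. For the inductive step, suppose $1\le i\le s-1$ and that $w_i\in \cR^{2k-i+1}(h_i)$ has already been chosen. Since $h_ih_{i+1}$ is an edge of $H$, by definition of the reduced graph there exist vertices $u\in h_i$ and $v\in h_{i+1}$ with $uv\in E(G)$. Because the partitions are nested, $u$ lies in $\cR^{2k-i+1}(h_i)$ (the same $\cR^{2k-i+1}$-class as $w_i$) and $v$ lies in $\cR^{2k-i}(h_{i+1})$.

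\medskip

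The key observation is now the defining property of the refinement step from $\cR^{2k-i}$ to $\cR^{2k-i+1}$: two vertices in the same class of $\cR^{2k-i+1}$ meet the same classes of $\cR^{2k-i}$ in their neighbourhoods. Applying this to $w_i$ and $u$, from $N(u)\cap \cR^{2k-i}(h_{i+1})\ni v\neq\emptyset$ we deduce
\[
N(w_i)\cap \cR^{2k-i}(h_{i+1})\neq \emptyset\,,
\]
so we may pick $w_{i+1}$ to be any vertex in this intersection. Then $w_iw_{i+1}\in E(G)$ and $w_{i+1}\in\cR^{2k-(i+1)+1}(h_{i+1})$, closing the induction.

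\medskip

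The only point that needs checking is that the partition $\cR^{2k-i}$ invoked at step $i$ actually exists, i.e.\ that $2k-i\ge 0$. Since the refinement is performed only for $i=1,\dots,s-1$ and the hypothesis gives $s\le 2k$, we have $2k-i\ge 2k-(s-1)\ge 1$, so the argument never reaches below $\cR^0=\cR$. This is in fact the reason the hypothesis is exactly $s\le 2k$, and is the place where the $2k$-fold iteration used in the definition of the refined partitions is consumed. No further obstacle is expected, as the whole argument is a direct unpacking of the inductive definition of the partitions $\cR^0\succcurlyeq\cR^1\succcurlyeq\dots\succcurlyeq\cR^{2k}$.
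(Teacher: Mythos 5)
Your proof is correct and follows essentially the same inductive construction as the paper's: at each step you use the containment $\cR^{2k}(h_i)\subseteq\cR^{2k-i+1}(h_i)$ and the defining property of the refinement from $\cR^{2k-i}$ to $\cR^{2k-i+1}$ to transfer the existence of a neighbour in $\cR^{2k-i}(h_{i+1})$ from the edge-witnessing vertex $u$ to $w_i$. You spell out the refinement step slightly more explicitly than the paper (which compresses it into ``the construction of the refinements shows\ldots''), but the argument is the same.
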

\begin{proof}
We shall locate the walk $W$ in an inductive manner and note that for $s=1$ it is trivial.

For $s\geq 2$ 
let a walk $W' =  w_{1} w_{2} \dots w_{s-1}$ satisfying 
 $w_{i} \in \cR^{2k - i+1}(h_{i})$ for every $i\in[s-1]$ be given.
The walk~$W_H$ in~$H$ guarantees an edge between $\cR^{2k}(h_{s-1})$ and $\cR^{2k}(h_{s})$ and, hence, 
there is an edge between  $\cR^{2k-(s-1)+1}(h_{s-1})$ and $\cR^{2k-(s-1)+1}(h_{s})$. Consequently, the 
construction of the refinements shows that $w_{s-1}\in \cR^{2k-(s-1)+1}(h_{s-1})$ must have a neighbour 
$w_s\in \cR^{2k-s+1}(h_{s})$ and the walk $W=W'w_s=w_1\dots w_{s-1}w_s$ has the desired properties. 
\end{proof}

Even if we assume in Claim~\ref{clm:HtoQ} that $W_H$ is a path in $H$ and, in particular,  
$h_i\neq h_j$ for all distinct $i$, $j\in[s]$,
it may happen that~$\cR^{0}(h_{i}) = \cR^{0}(h_{j})$ and, hence, we cannot guarantee
$w_i\neq w_j$. In other words, even if we apply Claim~\ref{clm:HtoQ} to
a path in $H$, the promised walk~$W$ might 
not be a path. However, combined with 
Proposition~\ref{prop:CDfree} we can get the following improvement.

\begin{clm}\label{clm:WtoP}
If there is an odd path $P_H=h_1\dots h_{s+1}$ of length $s\leq 2k-1$ in $H$, 
then there are vertices $v_{1} \in \cR^0(h_1)$ and $v_{s+1} \in \cR^0(h_{s+1})$ such that there is an odd path of length at most~$s$ between them.
\end{clm}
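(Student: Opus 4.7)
The plan is to apply Claim~\ref{clm:HtoQ} to the given odd path $P_H=h_1\dots h_{s+1}$, obtaining a walk $W=w_1w_2\dots w_{s+1}$ in $G$ of odd length $s$ with $w_i\in\cR^{2k-i+1}(h_i)\subseteq\cR^0(h_i)$. If $W$ is already a path in $G$, I take $v_1=w_1$ and $v_{s+1}=w_{s+1}$; otherwise I proceed by strong induction on $s$, with the trivial base case $s=1$ (where the $H$-edge $h_1h_2$ lifts directly to a crossing edge between $\cR^0(h_1)$ and $\cR^0(h_2)$). The key structural observation is that whenever $w_i=w_j$ for $i<j$, the disjointness of the $\cR^0$-partition forces $\cR^0(h_i)=\cR^0(h_j)$, since the common vertex lies in both.

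I choose a repetition minimizing $j-i$. If $j-i$ is even, cutting out the subwalk $w_i\dots w_j$ yields a strictly shorter odd walk in $G$ between the same endpoints $w_1,w_{s+1}$; iterating this reduction, I may assume that every remaining repetition has odd gap. For minimal odd $j-i$, the vertices $w_i,w_{i+1},\dots,w_{j-1}$ are then pairwise distinct, so $w_iw_{i+1}\dots w_j$ is an odd cycle of length $j-i$ in $G$, and Proposition~\ref{prop:CDfree}\ref{prop:Cfree} forces $j-i<k$. When $(i,j)\neq(1,s+1)$, the subpath $h_ih_{i+1}\dots h_j$ of $P_H$ has odd length $j-i<s$, so the strong induction hypothesis yields an odd path in $G$ of length at most $j-i$ between two distinct vertices of $\cR^0(h_i)=\cR^0(h_j)$. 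For $k\geq 4$ we have $j-i\leq k-1\leq 2k-5$, contradicting Claim~\ref{clm:commonneighbours}; for $k=3$ the conditions $j-i$ odd and $j-i<3$ would force $j-i=1$, i.e.\ a self-loop, which is impossible in the simple graph $G$. Thus the sub-case $(i,j)\neq(1,s+1)$ cannot arise.

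The only remaining possibility is $(i,j)=(1,s+1)$, in which case the (possibly reduced) walk is itself an odd cycle of length $s^*\leq s$ in $G$ with $s^*<k$. This closed-walk scenario is where I expect the main difficulty, since now $\cR^0(h_1)=\cR^0(h_{s+1})$ and the required odd path must connect two distinct vertices of that single $\cR^0$-class. The plan for ruling it out is to apply Claim~\ref{clm:HtoQ} a second time, now to the reversed path $h_{s+1}\dots h_1$, exploiting the freedom that claim affords in picking the starting vertex anywhere in $\cR^{2k}(h_{s+1})$; one then argues that at least one of the two resulting walks must be non-closed, thereby reducing to the previously handled sub-cases. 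Should both the forward and reverse constructions be forced to close, a finer argument applying Claim~\ref{clm:commonneighbours} to pairs of vertices on the putative cycle (whose pairwise odd cycle-distance is at most $s^*-1\leq 2k-5$), combined with the explicit $C_{2k-1}$-construction used in the proof of that claim, exhibits a copy of $C_{2k-1}$ in $G$, contradicting its $C_{2k-1}$-freeness and completing the proof.
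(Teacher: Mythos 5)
Your inductive framework (cut out even-gap repetitions, take the minimal odd-gap repetition, recurse) is a genuinely different decomposition from the paper's, and the sub-case $(i,j)\neq(1,s+1)$ that you resolve via the strong induction hypothesis plus Claim~\ref{clm:commonneighbours} is in the right spirit. Even there, though, the bookkeeping is not quite right: after an even reduction the remaining walk $W^*$ visits a non-contiguous subsequence of indices, so a repetition $w_i=w_j$ in $W^*$ with $j-i$ minimal odd need not have $w_i,w_{i+1},\dots,w_{j-1}$ all present in $W^*$, and the cycle you extract has length equal to the number of $W^*$-steps from $w_i$ to $w_j$, which coincides with $j-i$ only if no cut occurred in between. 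This is repairable (parities do match), but as written the step ``so $w_iw_{i+1}\dots w_j$ is an odd cycle of length $j-i$'' is not justified.

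The substantive gap is the case you explicitly flag as ``where I expect the main difficulty'': when the reduced walk is itself a closed odd cycle of length $\ell<k$. Your plan -- apply Claim~\ref{clm:HtoQ} to the reversed path and ``argue that at least one of the two resulting walks must be non-closed,'' or else run ``a finer argument'' producing a $C_{2k-1}$ -- is not an argument, and there is no reason the reversed construction should fail to close; the freedom Claim~\ref{clm:HtoQ} gives you is only the choice of the \emph{starting} vertex, and even a different start can perfectly well yield another closed walk. What the paper actually does here is different and is the crux of the whole proof: since $P_H$ is a path in $H$, the $\cR^{2k}$-classes $h_{i_1}$ and $h_{i_{\ell+1}}$ are \emph{distinct}, so the cycle $C$ misses at least one of them; choosing a vertex $c'_1$ in the missed class and rerunning the Claim~\ref{clm:HtoQ} argument along $h_{i_1}\dots h_{i_{\ell+1}}$ produces a \emph{second} odd $\ell$-cycle $C'$, disjoint from $C$ (disjointness again via Claim~\ref{clm:commonneighbours}), and the two cycles joined by a length-$4$ path through $X$ form a copy of $D_\ell$. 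The contradiction then comes from Proposition~\ref{prop:CDfree}\,\ref{prop:Dfree} (forcing $\ell\le 2k-9$) combined with the length-$(\ell+4)\le 2k-5$ odd paths in $D_\ell$ violating Claim~\ref{clm:commonneighbours}. Your sketch never invokes the $D_\ell$-freeness of Proposition~\ref{prop:CDfree}\,\ref{prop:Dfree}, which is the ingredient that makes the closed-walk case go through; without it, I do not see how to complete the argument, and the ``explicit $C_{2k-1}$-construction'' you gesture at is not enough, since the cycle vertices lie in pairwise distinct $\cR^0$-classes and Claim~\ref{clm:commonneighbours} gives nothing directly.
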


\begin{proof}
Consider a walk $W = w_{1} w_{2} \dots w_{s+1}$ in~$G$ with $w_{i} \in \cR^{0}(h_{i})$  given by Claim~\ref{clm:HtoQ}.
If this walk does not contain an odd $w_1$-\,$w_{s+1}$-path already, then~$W$ must contain an odd cycle.
Below we shall show that this leads to a contradiction and, hence, $W$ contains an 
odd $w_1$-\,$w_{s+1}$-path.

Considering 
an
odd cycle $C = c_{1} \dots c_{\l} c_{1}$ contained in $W\subseteq G$, 
such that
\[
	c_{1} = w_{i_1},\ c_2=w_{i_2},\ \dots,\ c_{\l} = w_{i_\l}, \qqand c_{1} = w_{i_{\l+1}}=w_{i_1}
\] 
for some set of indices satisfying $1 \leq i_{1} < i_2 < \dots < i_{\l} < i_{\l+1}\leq s+1$.
To find such a path, consider the walk $W$, delete an even $w_1$-\,$w_{s+1}$-path.
Now the remaining edges of $W$ are a family of closed walks, take an odd closed walk $W' = w'_{1} w'_{2} \dots w'_{s'+1} = w_{j_{1}} w_{j_{2}} \dots w_{j_{s'+1}}$, where $w'_{1} = w'_{s'+1}$.
If $W'$ is not a cycle, there is a smaller closed walk $W'' \subset W'$, such that the edges of $W'$ without the edges of $W''$ are also a closed walk, both retaining the order of the vertices from $W'$ and therefore from $W$.
One of $\{W', W''\}$ needs to be odd.
Iterating this process eventually gives rise to the odd cycle $C$.

In view of Proposition~\ref{prop:CDfree}~\ref{prop:Cfree} we must have $3\leq \l <k$. 
Consequently, $\l\leq 2k-5$ and since $\l$ is odd, 
it follows from Claim~\ref{clm:commonneighbours} that there is no path of length $\l$ between any two vertices from $\cR^0(c_1) = \cR^0(h_{i_1}) = \cR^0(h_{i_{\l+1}})$. 
Moreover, Claim~\ref{clm:commonneighbours} tells us that the $\l$ classes $\cR^0(c_1) = \cR^0(h_{i_1}) = \cR^0(h_{i_{\l+1}}), \dots, \cR^0(c_{\l})=\cR^0(h_{i_\l})$ from $\cR^0$ are distinct, since otherwise the cycle $C$ would contain an odd path of length at most $2k-7$ between two vertices of  some class in $\cR^0$.

Since $P_H$ is a path in $H$, we have $h_{i_{1}} \neq h_{i_{\l+1}}$ and the cycle $C$ avoids 
at least one of the sets $h_{i_{1}}$ or $h_{i_{\l+1}}$. Without loss of generality we may assume 
$C$ avoids $h_{i_1}$ and we fix an arbitrary vertex $c'_1 \in h_{i_{1}}$.

We are going to locate a second cycle of length $\l$ in $G$ that starts and ends in~$c'_1$. 
By construction this cycle is going to visit the same partition classes of $\cR^{0}$ as $C$.
For that we shall repeat the argument from Claim~\ref{clm:HtoQ} starting with $h_{i_{1}}\dots h_{i_{\l}}h_{i_{\l+1}}$ even though this is not necessarily a subpath of~$P_H$.
However, since $h_{i_{1}}\dots h_{i_{\l}}h_{i_{\l+1}}$ appear in that order in~$P_{H}$, 
we can repeat the reasoning of Claim~\ref{clm:HtoQ} starting with the vertex~$c'_1 \in h_{i_{1}}$. 
Continuing in an inductive manner, for $j\in[\l]$ we have to consider the two cases 
$i_{j+1}=i_j+1$ and $i_{j+1}>i_j+1$.

In the first case, we can indeed proceed as in the proof of Claim~\ref{clm:HtoQ}, since this means that $h_{i_j}h_{i_{j+1}}$ is an edge of~$P_H$. 
The second case
, by construction of $C$, 
only occurs, when $w_{i_j}=w_{i_{j+1}-1}$ and 
\[
	\cR^{2k-i_j+1}(h_{i_j})\subseteq \cR^{2k-(i_{j+1}-1)+1}(h_{i_{j+1}-1})\,.
\] 
Owing to the fact
that $w_{i_{j+1}-1}w_{i_{j+1}}$ is an edge of $W$ and that 
$w_{i_{j+1}-1}\in\cR^{2k-i_j+1}(h_{i_j})$ and~$w_{i_{j+1}-1}\in\cR^{2k-(i_{j+1}-1)+1}(h_{i_{j+1}-1})$, 
we infer from the construction of the refinements 
that $w_{i_j}=w_{i_{j+1}-1}$ also has a neighbour in $\cR^{2k-i_{j+1}+1}(h_{i_{j+1}})$, which concludes the induction step.

Therefore, we obtain another walk $C' = c'_{1} \dots c'_{\l}c'_{\l+1}$ 
where $c'_{j} \in \cR^{0}(h_{i_j})=R^0(c_j)$. Recalling that the $\l$ classes 
$\cR^0(h_{i_1}), \dots, \cR^0(h_{i_\l})$
are pairwise distinct, this implies that $C'$ is either a path or a cycle of odd length~$\l\leq 2k-5$. 
Moreover, since $\cR^0(h_{i_1}) = \cR^0(h_{i_{\l+1}})$ 
we infer from Claim \ref{clm:commonneighbours} that $C'$ cannot be a path and, hence, 
it must be an odd cycle of length $\l\leq 2k-5$.
By construction $c'_1$ avoids~$C$, and hence $C'$ and $C$ are disjoint, as otherwise we would 
have an odd path of length $\l$ connecting $c^{}_1$ and $c'_1$ in $\cR^0(c_1)$, which would
contradict Claim~\ref{clm:commonneighbours} again.

Consequently, $C$ and $C'$ form a copy of~$D_{\l}$ since $c^{}_1$ and $c'_1$ are connected 
by a path of length~$4$ whose three internal vertices avoid $C$ and $C'$ (and the middle vertex is 
from~$X$). 
Owing 
to Proposition \ref{prop:CDfree}~\ref{prop:Dfree} we have $\l \leq 2k-9$, but in $D_\l$ 
there exists an odd path of length $\l + 4 \leq 2k-5$ between $c^{}_i$ and $c'_i$ for every $i=2,\dots,\l$, which again contradicts Claim~\ref{clm:commonneighbours}.
\end{proof}

After these preparations we are now ready to conclude the proof of part~\ref{it:mthm:1} of Theorem~\ref{thm:main}.

\begin{clm}\label{clm:C2k-1}
The graph $H$ is $C_{2k-1}$-free.
\end{clm}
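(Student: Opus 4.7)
Assume for contradiction that $H$ contains a $C_{2k-1}$, say $h_1 h_2 \dots h_{2k-1} h_1$. The plan is to view this cycle as a closed walk $W_H = h_1 h_2 \cdots h_{2k-1} h_1$ of $s=2k$ vertices in $H$ and apply Claim~\ref{clm:HtoQ} to lift it to a walk $W = w_1 w_2 \cdots w_{2k}$ in $G$ with $w_i \in \cR^{2k-i+1}(h_i)$ (setting $h_{2k} := h_1$). Then $w_1 \in \cR^{2k}(h_1) = h_1$ while $w_{2k} \in \cR^{1}(h_1)$, so both endpoints lie in the common coarse class $\cR^{0}(h_1)$. The goal is to show that $W$ either is itself a simple $C_{2k-1}$ in $G$ (contradicting the hypothesis that $G$ is $C_{2k-1}$-free) or gives rise to a smaller odd configuration that contradicts Proposition~\ref{prop:CDfree}.

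If $w_1 = w_{2k}$ and the intermediate $w_i$ are pairwise distinct, $W$ is a $C_{2k-1}$ in $G$ and we are done. Otherwise the walk $W$ either contains a proper odd cycle (coming from an odd closed sub-walk) or, after deleting even closed sub-walks, reduces to an odd $w_1$-$w_{2k}$-path with $w_1 \neq w_{2k}$. In the first sub-case any odd cycle $C \subseteq W$ has odd length $\ell \leq 2k-3$, and Proposition~\ref{prop:CDfree}\ref{prop:Cfree} forces $\ell < k$. From here I would essentially copy the concluding argument of Claim~\ref{clm:WtoP}: Claim~\ref{clm:commonneighbours} ensures the $\ell$ classes of $\cR^0$ visited by $C$ are pairwise distinct; re-running the inductive lifting from a different vertex of $\cR^{2k}(h_{i_1})$ produces a second odd cycle $C'$ of length $\ell$ in $G$, disjoint from $C$ yet visiting the same $\cR^0$-classes; joining $C$ and $C'$ by a length-$4$ path through a vertex of $X$ produces a copy of $D_\ell$ in $G$. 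This contradicts Proposition~\ref{prop:CDfree}\ref{prop:Dfree} when $\ell \geq 2k-7$, and contradicts Claim~\ref{clm:commonneighbours} via the odd path of length $\ell+4 \leq 2k-5$ between $c^{}_i$ and $c'_i$ inside $D_\ell$ when $\ell \leq 2k-9$.

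The main obstacle is the second sub-case, in which $W$ reduces to an odd $w_1$-$w_{2k}$-path of length $\ell'$ between distinct vertices of $\cR^0(h_1)$. Since $w_1$ and $w_{2k}$ share the same $\mu$-vector and some coordinate is positive by~\eqref{eq:muQ}, Claim~\ref{clm:commonneighbours} only rules out $\ell' \leq 2k-5$, leaving $\ell' \in \{2k-3, 2k-1\}$. My approach here is to exploit the stronger information $w_1, w_{2k} \in \cR^1(h_1)$: the definition of $\cR^1$ forces $w_1$ and $w_{2k}$ to have neighbors in exactly the same classes of $\cR^0$, so in particular $w_{2k}$ has a neighbor in $\cR^0(h_2)$. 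Iterating this observation (or re-running Claim~\ref{clm:HtoQ} from $w_{2k}$ around the cycle $C_{2k-1}$ of $H$) yields a companion walk $W'$ in $G$, and comparing $W$ with $W'$ should produce either a simple $C_{2k-1}$ in $G$ or a proper odd cycle of length less than $2k-1$, both of which lead to the contradictions already obtained in the previous paragraph. In every case, the assumption that $H \supseteq C_{2k-1}$ is refuted, completing the proof of the claim.
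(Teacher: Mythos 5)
Your plan diverges substantially from the paper's and has a genuine gap that you yourself identify but do not close. The paper never attempts to lift the whole cycle $C_H$ to $G$; instead it runs a pigeonhole argument with the vectors $\mu$: summing~\eqref{eq:muQ} over the $2k-1$ vertices of $C_H$ forces some index $i$ with $\sum_x \mu_i(h_x) > 1$, so at least two classes $h_x, h_y$ have $\mu_i > 0$. Combining Claims~\ref{clm:WtoP} and~\ref{clm:commonneighbours} then shows (a) no third vertex of $C_H$ can have $\mu_i > 0$, so $\mu_i(h_x) + \mu_i(h_y) > 1$ and hence many common neighbours in $Q_i$, and (b) $h_x$ and $h_y$ must be at distance exactly $2k-3$ on $C_H$, yielding a $P_{2k-3}$ in $G$ between some $v \in \cR^0(h_x)$ and $u \in \cR^0(h_y)$. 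A common neighbour outside that path closes a $C_{2k-1}$ in $G$, contradiction. No lift of the full $H$-cycle is ever performed.

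Your third paragraph is where the argument breaks. You correctly observe that after lifting the closed walk $h_1\cdots h_{2k-1}h_1$ via Claim~\ref{clm:HtoQ}, the resulting walk $W$ may reduce to an odd $w_1$-$w_{2k}$-path of length $2k-3$ or $2k-1$ between distinct vertices of $\cR^0(h_1)$, and that Claim~\ref{clm:commonneighbours} rules out nothing in this range. An odd $(2k-1)$-path between two non-adjacent vertices of the same class is entirely consistent with $G$ being $C_{2k-1}$-free and with Proposition~\ref{prop:CDfree}, so you genuinely need a new idea here. The proposed fix — observing $w_1, w_{2k} \in \cR^1(h_1)$ and re-running the lift from $w_{2k}$ — only produces another walk with the same defect; there is no mechanism that forces the concatenation to close up or to contain a short odd cycle, and the claim that ``comparing $W$ with $W'$ should produce'' a contradiction is an assertion, not an argument. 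There is also a secondary issue in the first sub-case: the proof of Claim~\ref{clm:WtoP} crucially uses that $P_H$ is a \emph{path} in $H$ (the step ``Since $P_H$ is a path in $H$, we have $h_{i_1} \neq h_{i_{\ell+1}}$ and the cycle $C$ avoids at least one of $h_{i_1}$ or $h_{i_{\ell+1}}$''), and this fails for the closed walk you feed in when the odd cycle found in $W$ wraps around to $h_{2k} = h_1$. So even the sub-case you claim to handle by ``copying'' Claim~\ref{clm:WtoP} does not go through verbatim. You should abandon the lift-the-whole-cycle strategy and instead use the $\mu$-counting argument, applying Claim~\ref{clm:WtoP} only to genuine sub\emph{paths} of $C_H$.
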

\begin{proof}
Assume for a contradiction that there is a cycle $C_H = h_{1} \dots h_{2k-1} h_{1}$ of length $2k-1$ in $H$.
We recall that the vertices of~$H$ are partition classes of $\cR^{2k}$ and for a simpler notation we set
for any vertex $h_x$ of $C_H$
\[
	\mu_i(h_x)
:=
	\mu_i(v)\,,
\]
where $v$ is an arbitrary vertex from $\cR^0(h_x)$ and the definition of $\cR=\cR^0$ shows that the definition 
of $\mu_i(h_x)$ is indeed independent of the choice of $v\in\cR^0(h_x)$. 

By~\eqref{eq:muQ} we have 
\[
	\sum_{x=1}^{2k-1}\sum_{i\in I}\mu_i(h_x)|Q_i|
	> 
	n
	\geq
	\sum_{i\in I}|Q_i|
\]
and, hence, there is some $i\in I$ such that 
\begin{equation}\label{eq:final}
	\sum_{x=1}^{2k-1}\mu_i(h_x) > 1\,.
\end{equation}
In particular, there are at least two distinct vertices $h_x$ and $h_y$ of $C_H$ such that $\mu_i(h_x)>0$ and 
$\mu_i(h_y)>0$. 
On the other hand, among three vertices of~$C_H$ two are connected by an odd path of length at most $2k-5$ in $C_H$, since the negation is only true for vertices with distance 2 on $C_{H}$. Therefore it follows from Claim~\ref{clm:WtoP} and Claim~\ref{clm:commonneighbours}, that no other vertex~$h_z$ with $z\in[2k-1]\setminus \{x,y\}$ satisfies $\mu_i(h_z)>0$. 
Consequently, we have $\mu_i(h_x)+\mu_i(h_y)>1$, which means that any two vertices 
$v\in \cR^0(h_x)$ and $u\in \cR^0(h_y)$ have a common neighbour in~$Q_i$. In fact, since $2/\eps$ is assumed to be an integer, 
$v$ and~$u$ have at least $2|Q_i|/\eps>4k$ joint neighbours. Moreover, again Claim~\ref{clm:WtoP} and Claim~\ref{clm:commonneighbours} 
imply that $h_x$ and $h_y$ are connected by a path of length~$2k-3$ in~$C_H$ and that there is a path $P$ of length~$2k-3$ in~$G$ connecting some 
$v\in \cR^0(h_x)$ and $u\in \cR^0(h_y)$. Using one of the joint neighbours in~$Q_i$ outside $P$
yields a copy of $C_{2k-1}$ in $G$. This contradicts the $C_{2k-1}$-freeness of $G$ and concludes the proof of 
Claim~\ref{clm:C2k-1}.
\end{proof}

Claim~\ref{clm:C2k-1} together with \eqref{eq:goals} establishes the proof of part~\ref{it:mthm:1} of Theorem~\ref{thm:main}
and 
it remains 
to consider part~\ref{it:mthm:2}, when $G$ is assumed to be $\ccC_{2k-1}$-free.

In view of Proposition~\ref{prop:lbii} it suffices to verify the upper bound of assertion~\ref{it:mthm:2} of Theorem~\ref{thm:main}.
Compared to the proof of part~\ref{it:mthm:1} of Theorem~\ref{thm:main}, we have the additional assumption that~$G$ 
is not only $C_{2k-1}$-free, but also contains no cycle $C_\l$ for any odd~$\l<2k-1$. Consequently, the graph $H$ 
defined in the paragraph before~\eqref{eq:goals} in the proof of part~\ref{it:mthm:1} satisfies~\eqref{eq:goals}
in this case as well 
and owing to Claim~\ref{clm:C2k-1} it is $C_{2k-1}$-free. Hence, we only have to show that the $C_\l$-freeness of~$G$
for every odd $\l\leq 2k-3$ can be carried over to $H$ in this situation, which is rendered by the following claim.

\begin{clm}\label{clm:ccC2k-1}
If $G$ is $\ccC_{2k-1}$-free, then $H$ is also $\ccC_{2k-1}$-free.
\end{clm}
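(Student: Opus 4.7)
The plan is to adapt the proof of Claim~\ref{clm:C2k-1} to the stronger hypothesis that $G$ is $\ccC_{2k-1}$-free. I suppose for contradiction that $H$ contains an odd cycle $C_H = h_1 h_2 \dots h_\l h_1$ of some odd length $\l$ with $3 \leq \l \leq 2k-3$, and aim to produce an odd cycle of length at most $2k-1$ in $G$, contradicting the hypothesis.

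My first observation is that because $\l \leq 2k-3$, every pair of distinct vertices of $C_H$ is joined on $C_H$ by an odd path of length at most $\l - 2 \leq 2k-5$. Combining Claim~\ref{clm:WtoP} with Claim~\ref{clm:commonneighbours} then shows that no index $i \in I$ can have $\mu_i > 0$ on two distinct vertices of $C_H$, so the $\mu$-supports of $h_1, \dots, h_\l$ are pairwise disjoint and $h_1, \dots, h_\l$ lie in $\l$ distinct classes of $\cR^0$. In particular $\sum_x \mu_i(h_x) \leq 1$ for every $i$, so the counting inequality that drives the proof of Claim~\ref{clm:C2k-1} is no longer available here.

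I then lift the closed walk $h_1 h_2 \dots h_\l h_1$ via Claim~\ref{clm:HtoQ} to a walk $W = w_1 w_2 \dots w_{\l+1}$ in $G$ of length $\l \leq 2k$, with $w_i \in \cR^0(h_i)$ and $w_{\l+1} \in \cR^{2k-\l}(h_1) \subseteq \cR^0(h_1)$. Since $G$ is $\ccC_{2k-1}$-free, it contains no odd cycle of length at most $2k-1$; hence neither $w_1 = w_{\l+1}$ (which would give a closed odd walk of length $\l$) nor ``$W$ contains an odd cycle of length at most $\l$'' can occur. Therefore $w_1 \neq w_{\l+1}$ and $W$ must contain an odd $w_1$-\,$w_{\l+1}$-path $P$ of length at most $\l$. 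Because $w_1$ and $w_{\l+1}$ are distinct vertices of the $\cR^0$-class $\cR^0(h_1)$, which carries a positive $\mu_{i^*}$, Claim~\ref{clm:commonneighbours} forces $|P| \geq 2k-3$, so $|P| = 2k-3$ and hence $\l = 2k-3$.

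The remaining case $\l = 2k-3$ with $P$ an odd path of length exactly $2k-3$ between two distinct vertices of $\cR^0(h_1)$ is the main obstacle. Any common neighbour of $w_1$ and $w_{\l+1}$ in $G$ would close $P$ into a forbidden $C_{2k-1}$, so $N(w_1) \cap N(w_{\l+1}) = \emptyset$. To finish I would exploit the homogeneity of the refinement $\cR^1$: the edge $h_1 h_2 \in H$ forces every vertex of $\cR^1(h_1)$---in particular both $w_1$ and $w_{\l+1}$---to have a neighbour in $\cR^0(h_2)$. The walk $W$ supplies $w_2 \in N(w_1) \cap \cR^0(h_2)$, and I pick any $w_2^* \in N(w_{\l+1}) \cap \cR^0(h_2)$; either $w_2^* = w_2$ is already a forbidden common neighbour, or $w_2 \neq w_2^*$ are two distinct vertices of $\cR^0(h_2)$ joined by the odd path $w_2 w_1 P w_{\l+1} w_2^*$ of length $2k-1$. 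Iterating this construction along the cycle, and combining it with a length-$4$ detour through vertices $q \in N(w_1) \cap Q_{i^*}$, $q' \in N(w_{\l+1}) \cap Q_{i^*}$ and a shared $X$-neighbour of $q$ and $q'$, should produce a closed odd walk in $G$ of length at most $2k-1$, and hence a forbidden odd cycle. The delicate part is extracting this contradiction without the counting identity $\sum_x \mu_i(h_x) > 1$ that underpins Claim~\ref{clm:C2k-1}: the required structure has to come from $\ccC_{2k-1}$-freeness alone---$G$ is triangle-free, each $Q_i$ is an independent set, and any common neighbour of two same-$\cR^0$-class vertices joined by a short odd path closes up a short odd cycle.
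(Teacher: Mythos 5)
The proposal correctly executes the opening moves of the argument: reducing to $\l = 2k-3$, locating an odd path of length $2k-3$ between two distinct vertices $w_1, w_{\l+1}\in\cR^0(h_1)$, and observing via Claims~\ref{clm:WtoP} and~\ref{clm:commonneighbours} that for each index $i$ at most one vertex of $C_H$ can satisfy $\mu_i(h_x)>0$. However, you then go wrong at the critical juncture: you conclude from $\sum_x\mu_i(h_x)\leq 1$ that ``the counting inequality that drives the proof of Claim~\ref{clm:C2k-1} is no longer available here,'' and you abandon the counting approach. That is a misdiagnosis. Since $C_H$ has $2k-3$ vertices and $2k-3>\tfrac{1}{2}(2k-1)$ for $k\geq 3$, averaging~\eqref{eq:muQ} still yields some $i\in I$ with $\sum_{x=1}^{2k-3}\mu_i(h_x)>1/2$; combining this with your own observation that only one $x$ contributes then forces $\mu_i(h_x)>1/2$ for a single $x\in[2k-3]$. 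The punchline is that $\mu_i>1/2$ for a single class means any two distinct vertices of $\cR^0(h_x)$ have a common neighbour in $Q_i$, and by the cyclic symmetry of $C_H$ the odd path of length $2k-3$ you built for $h_1$ exists inside $\cR^0(h_x)$ as well, producing the forbidden $C_{2k-1}$. So the counting does close the argument; it simply delivers a threshold of $1/2$ rather than $1$, which suffices because the mass is concentrated on a single vertex.

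The replacement you propose in the final paragraph --- iterating through the refinement $\cR^1$, pushing the path around the cycle, and taking a length-$4$ detour through $X$ --- does not constitute a proof. After one step you only obtain an odd $w_2$-$w_2^*$-walk of length $2k-1$ inside $\cR^0(h_2)$ (and you have not even verified it is a path, since $w_2$ may lie on $P$), which is \emph{not} short enough to contradict Claim~\ref{clm:commonneighbours}, whose cutoff is $2k-5$. There is no clear mechanism by which further iteration shortens this to a contradiction, and you concede as much in the last sentence. You should return to the counting argument with the corrected constant $1/2$ and use the symmetry of $C_H$ to apply it at the correct vertex $h_x$.
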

\begin{proof}
Recall, that we  assume $k \geq 3$.
	Suppose for a contradiction that $H$ contains a cycle \mbox{$C_H=h_1\dots h_\l h_1$} for 
	some odd integer $\l$ with $3\leq \l\leq 2k-1$. In fact, it follows from 
	Claim~\ref{clm:C2k-1} that $\l\leq 2k-3$. Moreover, applying Claim~\ref{clm:HtoQ}
	to~$C_H$ yields a walk $W$ of length~$\l$ in $G$ which starts and ends in~$\cR^0(h_1)$. 
	Since~$G$ contains no odd cycle of length at most~$\l$, the walk $W$ contains an 
	odd path of length at most~$\l$ connecting two vertices in $\cR^0(h_1)$. Therefore,
	Claim~\ref{clm:commonneighbours} implies that~$\l=2k-3$ and by symmetry we infer that 
	for every $x\in[2k-3]$ there exists an odd path of length~$2k-3$ between two vertices
	$v_x$, $u_x\in \cR^0(h_x)$.
	
As 
	in the proof of Claim~\ref{clm:C2k-1} we infer from~\eqref{eq:muQ}
that
	\[
		\sum_{x=1}^{2k-3}\sum_{i\in I}\mu_i(h_x)|Q_i|
		>
		\frac{2k-3}{2k-1}n
		>
		\frac{1}{2}\sum_{i\in I}|Q_i|\,,
	\] 
	where we used $k\geq 3$ for the last inequality.
	Consequently, there is some index $i\in I$ such that~$\sum_{x=1}^{2k-3}\mu_i(h_x)>1/2$.
	Since for every distinct $x$, $y\in[2k-3]$ there exists an odd path of length at most $2k-5$ 
	connecting a vertex from $\cR^0(h_x)$ with a vertex from $\cR^0(h_y)$ there is only one vertex of 
	$C_H$ such that $\mu_i(h_x)>0$ and, hence, for that $x\in[2k-3]$ we have 
	$\mu_i(h_x)>1/2$. In particular, every two distinct vertices 
	$v$, $u\in \cR^0(h_x)$ have a common neighbour in~$Q_i$ and, since $2/\eps$ is assumed to be an integer, 
	$v$ and~$u$ have at least $2|Q_i|/\eps>4k$ joint neighbours. Applying this observation to $v_x$ and $u_x$ 
	leads to an odd cycle of length $2k-1$ in $G$, which is a contradiction and concludes the proof of 
	Claim~\ref{clm:ccC2k-1}.
\end{proof}

This concludes the proof of Theorem~\ref{thm:main}.
\end{proof}

\section{Odd Tetrahedra}
\label{sec:tetrahedra}

Letzter and Snyder~\cite{LS} obtained  a stronger version of Theorem~\ref{thm:main}\,\ref{it:mthm:2} for $k=3$,
by showing that the homomorphic images can be chosen from the family of generalised 
Andr\'asfai graphs (see Definition~\ref{def:AG}).
More precisely, it was shown, that $G\ahom A_{3,r}$ for every $G\in\ccG_{\ccC_5}(\alpha)$ as long as 
$\alpha>\frac{r+1}{5r+2}$. 
However, it turns out that such an explicit form of the theorem does not extend to other values of~$k\geq 2$.
For $k=2$ this was observed by H\"aggkvist~\cite{H81}, who showed that there exist appropriate 
(unbalanced) blow-ups of the Gr\"otzsch graph  in $\ccG_{C_3}(10/29)$ which are $4$-chromatic, while 
$\chi(A_{2,r})\leq 3$ for every $r\geq 1$.

In Proposition~\ref{prop:counter} below we provide a counterexample for a stronger version of Theorem~\ref{thm:main}\,\ref{it:mthm:2}
(like the one obtained in~\cite{LS}) for every $k>3$ by exhibiting graphs 
in~$\ccG_{\ccC_{2k-1}}(\frac{1}{2k-1}+\eps)$ for some~$\eps>0$ that are not  
homomorphic to any generalised Andr\'asfai graph from $\ccA_{k}$ (see Definition~\ref{def:AG}).

\begin{dfn}[$(2k+1)$-tetrahedra]
Given $k \geq 2$ we denote by $\ccT_{k}$ the set of graphs $T$ consisting of
\begin{enumerate}[label=\rmlabel]
\item
one cycle $C_{T}$ with three branch vertices $a_{T}$, $b_{T}$, and $c_{T} \in V(C_{T})$,
\item
a center vertex $z_{T}$, and
\item
internally vertex disjoint paths (called spokes) $P_{az}$, $P_{bz}$, $P_{cz}$ connecting the branch vertices with the center vertex.
\end{enumerate}
Furthermore, we require that each cycle in $T$ containing $z_T$ and exactly two of the branch vertices must have length $2k+1$, and the spokes have length at least~$2$.
\end{dfn}

\begin{lemma}\label{prop:tetra}
For all integers $k \geq 2$ and $r \geq 1$ there is no  $(2k+1)$-tetrahedra 
$T \in \ccT_{k}$ that is homomorphic to the Andr\'asfai graph  $A_{k, r}$.
\end{lemma}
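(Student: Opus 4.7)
The plan is to assume a homomorphism $\phi\colon T\to A_{k,r}$ exists and derive a contradiction by analysing winding numbers around certain cycles of $T$ under $\phi$. Since the vertices of $A_{k,r}$ live in $\RR/\ZZ$ and two vertices are adjacent iff their $\RR/\ZZ$-distance lies in the open interval $I=(\tfrac{k-1}{2k-1},\tfrac{k}{2k-1})$, for every oriented edge $u\to v$ of $T$ the quantity $\phi(v)-\phi(u)\pmod 1$ has a unique real representative $\omega(u,v)\in I$. The interval $I$ is symmetric about $1/2$, so $\omega(u,v)+\omega(v,u)=1$. For any closed walk $W$ of length $\ell$ in $T$ with a fixed orientation, the sum $\omega(W)$ of these step-values is an integer lying in the open interval $(\ell\tfrac{k-1}{2k-1},\ell\tfrac{k}{2k-1})$.

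Applied to the three prescribed $(2k+1)$-cycles $s_{ab},s_{bc},s_{ca}$ of $T$ (each going through $z_T$ and two branch vertices), this interval equals $(k-\tfrac{1}{2k-1},k+1+\tfrac{1}{2k-1})$ and thus forces $\omega(s_{xy})\in\{k,k+1\}$ for each pair $xy\in\{ab,bc,ca\}$. Next consider the even cycle $e_b=s_{ab}\triangle s_{bc}$, i.e.\ the unique cycle in $T$ using spokes $P_{az},P_{cz}$ and the arcs $ab,bc$ of $C_T$; its length equals $L_b=p_a+q_{ab}+q_{bc}+p_c=4k+2-2p_b$, where $p_x$ denotes the length of the spoke $P_{xz}$ and $q_{xy}$ the length of the arc of $C_T$ between $x$ and $y$. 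Using that $I$ is symmetric about $1/2$, the constraining interval for $\omega(e_b)$ is centred at the integer $L_b/2=2k+1-p_b$ with half-width $\tfrac{L_b}{2(2k-1)}=\tfrac{2k+1-p_b}{2k-1}$. Because spokes have length at least $2$, this half-width is at most $1$, so the open interval contains exactly one integer. Hence $\omega(e_b)=2k+1-p_b$, and analogously $\omega(e_a)=2k+1-p_a$ and $\omega(e_c)=2k+1-p_c$.

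Orient each $s_{xy}$ as $z\to x\to y\to z$ and each $e_x$ consistently with the concatenation of the two corresponding $s$-cycles. Traversing $s_{ab}$ followed by $s_{bc}$ uses the spoke $P_{bz}$ exactly once in each direction, which contributes $p_b$ to the total step-sum, while the remaining edges form precisely the cycle $e_b$ in its chosen orientation. This yields the key identity
\[
\omega(s_{ab})+\omega(s_{bc})=\omega(e_b)+p_b=2k+1,
\]
and by the symmetric arguments involving $e_c$ and $e_a$ we likewise obtain $\omega(s_{bc})+\omega(s_{ca})=2k+1$ and $\omega(s_{ca})+\omega(s_{ab})=2k+1$. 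Adding the three identities gives $2(\omega(s_{ab})+\omega(s_{bc})+\omega(s_{ca}))=3(2k+1)$, so the integer sum $\omega(s_{ab})+\omega(s_{bc})+\omega(s_{ca})$ must equal the non-integer $\tfrac{3(2k+1)}{2}$, a contradiction.

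The delicate step is the uniqueness of the integer in the constraining interval for each even cycle $e_x$: this is where the hypothesis that every spoke has length at least $2$ is essential, since otherwise the half-width could exceed $1$ and admit two integer values, breaking the rigid identity $\omega(s_{ab})+\omega(s_{bc})=2k+1$.
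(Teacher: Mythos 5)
Your proof is correct, and it takes a genuinely different route from the paper. The paper works with explicit coordinates: it fixes the Hamiltonian cycle $u_0u_1\dots$ of $A_{k,r}$ constructed in Proposition~\ref{prop:lbii}, and its Claims~\ref{clm:tetra1} and~\ref{clm:tetra2} pin down, given $\phi(z_T)=u_0$, the two possible landing spots $u_{i(2k-1)+d}$ and $u_{i(2k-1)+(2k+1-d)}$ for each branch vertex and show that within any $(2k+1)$-cycle the two branch vertices must take opposite ``types''; since the three branch vertices form a triangle of such constraints, no assignment exists. Your argument packages the same cyclic structure into winding numbers: each oriented edge contributes a value in the length-$\tfrac{1}{2k-1}$ interval around $\tfrac12$, so closed walks of length $\ell$ have integer windings pinned to a short interval around $\ell/2$. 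The even cycles $e_x$ have their windings forced exactly (this is precisely where $p_x\ge 2$ is needed, as you note), the spoke cancellations give the three relations $\omega(s_{ab})+\omega(s_{bc})=2k+1$ etc., and summing yields the parity contradiction $2\cdot\text{integer}=3(2k+1)$. Conceptually the two arguments detect the same obstruction --- a proper $2$-colouring of $K_3$ cannot exist --- but your formulation is coordinate-free, avoids the detailed case analysis of Claims~\ref{clm:tetra1} and~\ref{clm:tetra2}, and isolates exactly which quantities are integers and what intervals constrain them, which makes the role of the hypothesis ``spokes have length at least $2$'' transparent (half-width $\tfrac{2k+1-p_x}{2k-1}\le 1$). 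One small point worth making explicit in a final writeup is that $\omega(e_x)=L_x/2$ is orientation-independent, since reversing an orientation sends $\omega$ to $L_x-\omega$, and the forced value $L_x/2$ is the fixed point of that involution.
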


\begin{proof}
Let $T \in \ccT_{k}$ be given and let the three spokes consist of $\l_{a}, \l_{b}, \l_{c} \geq 2$ edges, respectively.
Suppose for a contradiction that $T\ahom A_{k, r}$ and let $\phi$ be such a homomorphism.
Since~$T$ contains an odd cycle we have $r\geq 2$ and let $C_{A}=u_0\dots u_{(2k-1)(r-1)+1}u_0$ be the Hamiltonian cycle of $A_{k, r}$ such that $N(u_0)=\{u_{i(2k-1)+1}\colon i=0,\dots,r-1\}$ (c.f.\ proof of Proposition~\ref{prop:lbii}\,\ref{it:agraphs3}). 

\begin{clm}\label{clm:tetra1}
Let $v$, $v'$ be two vertices of a $2k+1$ cycle $C$ in $T$ with distance $d \geq 2$ in $C$.
If~$\phi(v) = u_{0}$, than $\phi(v') \in \{u_{i(2k-1) + d}, u_{i(2k-1) + (2k+1 - d)}\}$ for some integer $0 \leq i \leq r - 2$.
\end{clm}

\begin{proof}
In $C$ there are two paths between $v$ and $v'$ and let $d$ and $d'$ be their lengths.
There cannot be a path of length $d - 2s$ or $d' - 2s$ with $s\geq 1$ between $\phi(v)$ and $\phi(v')$, since this path together with the embedding of the $v$-$v'$-path of other parity from $C$ would form a closed odd walk of length less than $2k+1$, contradicting Proposition \ref{prop:lbii} \ref{it:agraphs2}.
Similarly,~$\phi(v')$ is not in the neighbourhood of $\phi(v)=u_0$ in $A_{k,r}$, since $2\leq d\leq k < d'\leq 2k-1$.

Consequently, $\phi(v') $ will lie on a segment $S$ between $u_{i(2k-1)+1}$ and $u_{(i + 1)(2k-1)+1}$ on the Hamiltonian cycle $C_A$ for some integer $0 \leq i \leq r - 2$. The segment $S$, together with~$u_0=\phi(v)$ 
forms a $C_{2k+1}$, and since there are only two vertices with distance $d$ from~$u_0=\phi(v)$ on this $C_{2k+1}$, an embedding of $v'$ onto any other vertex gives rise to a $v$-$v'$-path of length $d - 2s$ or $d' - 2s$ with $s\geq 1$.
Therefore, $\phi(v') \in \{u_{i(2k-1) + d}, u_{i(2k-1) + (2k+1 - d)}\}$ as claimed.
\end{proof}

\begin{clm}\label{clm:tetra2}
Let $v$, $v'$, $v''$ be distinct vertices of a $2k+1$ cycle $C$ in $T$.
Let $P'$ be the path from $v$ to $v'$ avoiding $v''$ on $C$ and let $P''$ be the path from $v$ to $v''$ avoiding $v'$ on~$C$. Suppose~$d', d'' \geq 2$ are the lengths of $P'$ and $P''$.
If $\phi(v) = u_{0}$, then $\phi(v') = u_{i(2k-1) + d'}$ and $\phi(v'') = u_{j(2k-1) + (2k+1 - d'')}$, 
or $\phi(v') = u_{i(2k-1) + (2k+1 -d')}$ and $\phi(v'') = u_{j(2k-1) + d''}$,
for some integers $0 \leq i, j \leq r - 1$.
\end{clm}

\begin{proof}
By Claim \ref{clm:tetra1} it suffices to show, that $\phi(v') = u_{i(2k-1) + d'}$ implies $\phi(v'') \neq u_{j(2k-1) + d''}$ and $\phi(v') = u_{i(2k-1) + (2k+1 - d')}$ implies $\phi(v'') \neq u_{j(2k-1) + (2k+1 - d'')}$, for all $0 \leq i, j\leq r - 1$.

In the first case, we may assume that $i \leq j$.
Since $u_{j(2k-1) + 2}$ is a neighbour of $u_{i(2k-1)+1}$, we may consider the path $P$ starting with the path in  $C_{A}$ from $u_{i(2k-1) + d'}$ to $u_{i(2k-1)+1}$ together with the edge from $u_{i(2k-1)+1}u_{j(2k-1) + 2}$ and then following $C_{A}$ to $u_{j(2k-1) + d''}$.
The path $P$ consists of $(d' - 1) + 1 + (d'' - 2) = d' + d'' - 2$ edges.
Together with the embedding of the path between $v'$ and $v''$ from  $C$ avoiding $v$, this yields a closed odd walk of length at most $2k-1$ in $A_{k, r}$, contradicting Proposition \ref{prop:lbii} \ref{it:agraphs2}.
A similar argument for the second case concludes the proof of the claim.
\end{proof}
Note that $i(2k-1) + d \neq i(2k-1) + (2k+1 - d)$ for all integers $d, i \geq 0$. Since $z_{T}$ lies in three $C_{2k+1}$, each also containing two of the vertices $a_{T}$, $b_{T}$, $c_{T}$, if $\phi(z_{T})=u_0$, then 
it follows from Claim \ref{clm:tetra2}, that not all three branch vertices can be embedded onto $A_{k, r}$.
Consequently, there is no homomorphism from $T$ to $A_{k,r}$ and Lemma~\ref{prop:tetra} is proved.
\end{proof}

Suitable blow-ups of $(2k+1)$-tetrahedrons show that for every $k\geq 4$ 
there are graphs in~$\ccG_{\ccC_{2k-1}}(\frac{1}{2k-1}+\eps)$ for $\eps>0$ that 
are not homomorphic to $A_{k,r}$ for any $r\geq 1$.

\begin{prop}\label{prop:counter}
	For every integer $k\geq 4$ there is some $\eps>0$ and there are infinitely many graphs 
	in  $\ccG_{\ccC_{2k-1}}(\frac{1}{2k-1}+\eps)$ that are not homomorphic to 
	$A_{k,r}$ for any $r\geq 1$.
\end{prop}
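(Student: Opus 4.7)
The plan is to construct, for each $k \geq 4$, an infinite family of weighted blow-ups of a carefully chosen $(2k+1)$-tetrahedron $T_k \in \ccT_k$. Because every blow-up $G$ we build contains $T_k$ as a subgraph, $G \ahom A_{k,r}$ would imply $T_k \ahom A_{k,r}$, contradicting Lemma~\ref{prop:tetra}. So it remains to arrange that $G \in \ccG_{\ccC_{2k-1}}\bigl(\frac{1}{2k-1}+\eps\bigr)$ for some $\eps = \eps(k) > 0$.

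I would fix $T_k$ with spokes of lengths $(\ell_a,\ell_b,\ell_c) = (3,3,2k-5)$ attached to a cycle of length $\ell_C = 2k+1$ whose three branch vertices partition it into arcs of lengths $(d_{ab},d_{ac},d_{bc}) = (2k-5,3,3)$. The pair-cycle condition $\ell_x + \ell_y + d_{xy} = 2k+1$ is immediate for every pair, and the three cycles through all three branch vertices have even lengths $12$, $4k-4$, $4k-4$. Hence $T_k$ has odd girth exactly $2k+1$ and is $\ccC_{2k-1}$-free, and since weighted blow-ups cannot shorten odd cycles, every blow-up of~$T_k$ remains $\ccC_{2k-1}$-free. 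Note however that $|V(T_k)| = 4k$ and $\delta(T_k) = 2$, so a balanced blow-up gives only $\delta/n = \frac{1}{2k} < \frac{1}{2k-1}$, and an unbalanced weighting is indispensable.

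Replacing each $v \in V(T_k)$ by $t_v \geq 1$ copies produces a weighted blow-up with minimum-degree ratio $\min_v \sum_{u \sim v} t_u \big/ \sum_u t_u$, and maximising this is a linear programme. Partition the internal vertices of $T_k$ by their distance to the nearest branch or centre vertex, so that vertices on the same ``layer'' of the same path type share one weight; by the symmetries of $T_k$ only $m+3$ weight classes arise, where $m := k-3$. Along the two long paths the interior constraints read $t_{i-1} + t_{i+1} \geq \tau \sum_u t_u$ at each layer $i \in \{2,\ldots,m\}$, a period-$4$ linear recurrence whose dual I would solve by direct substitution. The LP value turns out to be $\frac{1}{2k-2}$ when $k$ is odd and $\frac{3}{6k-4}$ when $k$ is even, both strictly above $\frac{1}{2k-1}$. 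A sufficiently small perturbation of the LP optimum yields positive rational weights realising $\delta/n \geq \frac{1}{2k-1}+\eps_k$ for a fixed $\eps_k>0$; clearing denominators gives positive integer weights, and multiplying them by an arbitrary scalar $s \in \NN$ produces infinitely many pairwise non-isomorphic blow-ups with the required properties. The main technical difficulty is the LP case analysis over the four residues of $k \pmod 4$, but the period-$4$ recurrence delivers a uniform positive gap in every class.
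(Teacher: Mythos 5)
Your construction of the base tetrahedron is exactly the graph~$T^*$ used in the paper: taking $K_4$ and replacing two opposite edges by paths of length~$2k-5$ and the remaining four by paths of length~$3$ gives precisely spoke lengths $(3,3,2k-5)$ and arc lengths $(2k-5,3,3)$, and your verification that the pairwise cycles have length $2k+1$, that the three "through-all-branch-vertices" cycles are even, and that every blow-up stays $\ccC_{2k-1}$-free and inherits non-embeddability into $A_{k,r}$ from Lemma~\ref{prop:tetra}, is all correct and matches the paper. Where you diverge is in packaging the choice of blow-up sizes as a linear programme maximising $\min_v \sum_{u\sim v} t_u / \sum_u t_u$ and asserting its value is $\frac{1}{2k-2}$ for odd $k$ and $\frac{3}{6k-4}$ for even $k$; the paper instead simply writes down explicit weights (weight~$2f$ on the degree-three vertices and on inner long-path vertices at distance $\equiv 0 \pmod 4$ from an endpoint when $k$ is even, and an analogous $f$-versus-$1$ assignment keyed to distance $\equiv 1 \pmod 4$ when $k$ is odd) and checks the ratios directly, obtaining exactly the values you claim. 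Your LP framing is a clean way to \emph{motivate} those weights, but as written it leaves the decisive computation asserted rather than carried out -- the proof is not complete until either the LP is actually solved or explicit weights realising a ratio above $\frac{1}{2k-1}$ are exhibited, which is the step the paper supplies.
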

\begin{proof}
Let $k \geq 4$ be fixed and consider the graph $T^{*}$ witch is obtained from 
$K_{4}$, by  replacing two independent edges by a path of length $2(k-3)+1$ 
and the other four edges are replaced by a path of length $3$. In particular, $|V(T^*)|=4k$
and $T^*\in\ccT_{k}$, as all the original triangles of $K_4$ 
are replaced a $C_{2k+1}$. Owing to Lemma~\ref{prop:tetra}, we know that  
$T^{*}$ is not homomorphic to~$A_{k, r}$ and the construction also ensures that 
$T^{*}$ is $\ccC_{2k-1}$-free.

If $k \geq 4$ is even we consider the following blow-ups of $T^{*}$. For every integer 
$f\geq 1$ we consider $T_f^{\rm e}$ obtained from  $T^*$ where the four vertices of degree three and the inner vertices on the two long paths with distance $0\pmod 4$ to one of the two end vertices of the path are replaced by independent sets of size $2f$, while all the other vertices are replaced by independent sets of size $f$.
The graph $T_f^{\rm e}$ is $3f$ regular and has 
\[
	2f\cdot 2(k-2)+f\cdot 2(k+2)
	=
	(6k-4)\cdot f
\]
vertices. Consequently,
\[
	\frac{\delta(T^{\rm e}_f)}{|V(T^{\rm e}_f)|}
	=\frac{3}{6k-4}
	\geq
	\frac{3}{6k-3}+\eps
	= 
	\frac{1}{2k-1}+\eps
\]
for sufficiently small $\eps>0$.
Moreover, since $T^{\rm e}_f$ is a blow-up of $T^*$ it is also $\ccC_{2k-1}$-free and not embeddable into $A_{k, r}$, which shows proves Proposition~\ref{prop:counter} of even integers $k\geq 4$.

For odd integers $k \geq 5$ we also consider blow-ups of $T^*$. For some integer $f\geq 1$ let~$T_f^{\rm o}$ be obtained from $T^*$ by replacing the vertices of degree three and the inner vertices on the two long paths with distance $1\pmod 4$ to one of the end vertices of the path 
by independent sets of size $f$ and all the remaining vertices are kept unchanged.
This blow-up has
\[
	f\cdot2(k-1)+2(k+1)=(f+1)(2k-2)+4
\] 
vertices and minimum degree $f+1$. Consequently, 
\[
	\frac{\delta(T^{\rm o}_f)}{|V(T^{\rm o}_f)|}
	=\frac{f+1}{(2k-2)(f+1)+4}
	\geq
	\frac{1}{2k-1}+\eps 
\]
for sufficiently small $\eps>0$ and sufficiently large $f$.
Again the blow-up $T^{\rm o}_f$ is $\ccC_{2k-1}$-free and not embeddable into $A_{k, r}$, which 
concludes the proof of Proposition~\ref{prop:counter} for odd integers $k\geq 5$.
\end{proof}

\section{Concluding remarks}
\label{sec:remarks}
Theorem~\ref{thm:main} provides only an upper bound for $\dhom(C_{2k-1})$ 
and at this point it is not clear if it is best possible. Proving a matching lower or just 
showing~$\dhom(C_{2k-1})>0$, would require to establish the existence of a sequence 
of graphs~$(G_n)_{n\in\NN}$
with members from $\ccG_{C_{2k-1}}(\alpha)$ for some $\alpha>0$ having no homomorphic $C_{2k-1}$-free
image $H$ of bounded size. However, without imposing~$H$ to be $C_{2k-1}$-free itself, no such sequence exists for $k\geq 3$,
as was shown by Thomassen~\cite{Th07}, 
as the chromatic threshold of odd cycles other than the triangle is $0$, 
which makes the problem somewhat delicate and for the first 
open case we raise the following question.
\begin{question}\label{q:1}
	Is it true that $\dhom(C_5)>0$?
\end{question}
The affirmative answer to Question~\ref{q:1} would, in particular, show that there is a graph~$F$ with 
$\dhom(F)>\dchi(F)$. To our knowledge such a strict inequality is only known for families 
of graphs $\ccF$, like for $\ccF=\ccC_{2k-1}$ for $k\geq 3$.

The lack of lower bounds for families consisting of a single graph, 
may suggest the following natural variation of the homomorphic threshold
\begin{multline*}
	\dhomp(F) 
	= 
	\inf\big\{\alpha\in[0,1] \colon \text{there is an $\ccF$-free graph $H=H(\ccF,\alpha)$} \\
		\text{such that $G\ahom H$ for every $G\in\ccG_F(\alpha)$}\}\,,
\end{multline*}
where $\ccF$ consists of all surjective homomorphic images of~$F$. For odd cycles we have 
$\dhomp(C_{2k-1})=\dhom(\ccC_{2k-1})$ and in view of Theorem~\ref{thm:main} 
it seems possible that $\dhomp(F)$ is easier to determine.

In the proof of Theorem~\ref{thm:main} we showed that every
$G\in\ccG_{C_{2k-1}}(\tfrac{1}{2k-1}+\eps)$ is homomorphic to a $C_{2k-1}$-free 
graph $H$ on at most $K=K(k,\eps)$ vertices, where $K$ is given by a
$2(k+1)$-times iterated exponential function in $\textrm{poly}(1/\eps,k)$. We believe that this
dependency is far from being optimal and maybe already
$K=O(\textrm{poly}(1/\eps,k))$ is sufficient.

In Proposition~\ref{prop:CDfree}~\ref{prop:Cfree} we observed that $C_{2k-1}$-free 
graphs $G$ of high minimum degree are in addition also $C_{2j-1}$-free for some sufficiently large $j< k$
depending on the imposed minimum degree. A more careful analysis of the argument
may yield the correct dependency between~$j$ and the minimum degree of $G$ and, 
moreover, yield a stability version of such a result. However, for a 
shorter presentation we used the same minimum degree assumption as given by Theorem~\ref{thm:main}, which 
sufficed for our purposes. It would also be interesting to see, if the excluded cycles of shorter 
odd length can be also excluded for the homomorphic image~$H$ in the proof of  Theorem~\ref{thm:main}.

Finally, we remark that the blow-ups of tetrahedra considered in Section~\ref{sec:tetrahedra}
are not from $\ccG_{\ccC_{2k-1}}(\frac{1}{2k-2})$. This suggests the question whether for every $k\geq 4$ and 
every $G\in \ccG_{\ccC_{2k-1}}(\frac{1}{2k-2})$ there is some $r\geq 1$ such that $G\ahom A_{k,r}$. 

\subsection*{Acknowledgments}
We we thank both referees for their detailed and helpful remarks.

\begin{bibdiv}
\begin{biblist}

\bib{ABGKM13}{article}{
   author={Allen, Peter},
   author={B\"ottcher, Julia},
   author={Griffiths, Simon},
   author={Kohayakawa, Yoshiharu},
   author={Morris, Robert},
   title={The chromatic thresholds of graphs},
   journal={Adv. Math.},
   volume={235},
   date={2013},
   pages={261--295},
   issn={0001-8708},
   review={\MR{3010059}},
}

\bib{A62}{article}{
   author={Andr\'asfai, B.},
   title={\"Uber ein Extremalproblem der Graphentheorie},
   language={German},
   journal={Acta Math. Acad. Sci. Hungar.},
   volume={13},
   date={1962},
   pages={443--455},
   issn={0001-5954},
   review={\MR{0145503}},
}

\bib{A64}{article}{
   author={Andr\'asfai, B.},
   title={Graphentheoretische Extremalprobleme},
   language={German},
   journal={Acta Math. Acad. Sci. Hungar},
   volume={15},
   date={1964},
   pages={413--438},
   issn={0001-5954},
   review={\MR{0169227}},
}

\bib{AES74}{article}{
   author={Andr\'asfai, B.},
   author={Erd\H os, P.},
   author={S\'os, V. T.},
   title={On the connection between chromatic number, maximal clique and
   minimal degree of a graph},
   journal={Discrete Math.},
   volume={8},
   date={1974},
   pages={205--218},
   issn={0012-365X},
   review={\MR{0340075}},
}

\bib{Er57}{article}{
   author={Erd\H os, P.},
   title={Remarks on a theorem of {R}amsay},
   journal={Bull. Res. Council Israel. Sect. F},
   volume={7F},
   date={1957/1958},
   pages={21--24},
   review={\MR{0104594}},
}

\bib{EG59}{article}{
   author={Erd\H os, P.},
   author={Gallai, T.},
   title={On maximal paths and circuits of graphs},
   language={English, with Russian summary},
   journal={Acta Math. Acad. Sci. Hungar},
   volume={10},
   date={1959},
   pages={337--356},
   issn={0001-5954},
   review={\MR{0114772}},
}

\bib{ES73}{article}{
   author={Erd\H os, P.},
   author={Simonovits, M.},
   title={On a valence problem in extremal graph theory},
   journal={Discrete Math.},
   volume={5},
   date={1973},
   pages={323--334},
   issn={0012-365X},
   review={\MR{0342429}},
}

\bib{GL11}{article}{
   author={Goddard, Wayne},
   author={Lyle, Jeremy},
   title={Dense graphs with small clique number},
   journal={J. Graph Theory},
   volume={66},
   date={2011},
   number={4},
   pages={319--331},
   issn={0364-9024},
   review={\MR{2791450}},
}

\bib{GRS84}{article}{
   author={Gy\'arf\'as, A.},
   author={Rousseau, C. C.},
   author={Schelp, R. H.},
   title={An extremal problem for paths in bipartite graphs},
   journal={J. Graph Theory},
   volume={8},
   date={1984},
   number={1},
   pages={83--95},
   issn={0364-9024},
   review={\MR{732020}},
}

\bib{H81}{article}{
   author={H\"aggkvist, Roland},
   title={Odd cycles of specified length in nonbipartite graphs},
   conference={
      title={Graph theory},
      address={Cambridge},
      date={1981},
   },
   book={
      series={North-Holland Math. Stud.},
      volume={62},
      publisher={North-Holland, Amsterdam-New York},
   },
   date={1982},
   pages={89--99},
   review={\MR{671908}},
}

\bib{JLR}{book}{
   author={Janson, Svante},
   author={\L uczak, Tomasz},
   author={Ruci{\'n}ski, Andrzej},
   title={Random graphs},
   series={Wiley-Interscience Series in Discrete Mathematics and
   Optimization},
   publisher={Wiley-Interscience, New York},
   date={2000},
   pages={xii+333},
   isbn={0-471-17541-2},
   review={\MR{1782847}},
   doi={10.1002/9781118032718},
}

\bib{LS}{article}{
   author={Letzter, Shoham},
   author={Snyder, Richard},
   title={The homomorphism threshold of $\{C_3,C_5\}$-free graphs},
   journal={J. Graph Theory},
   volume={90},
   date={2019},
   number={1},
   pages={83--106},
}

\bib{Lu06}{article}{
   author={\L uczak, Tomasz},
   title={On the structure of triangle-free graphs of large minimum degree},
   journal={Combinatorica},
   volume={26},
   date={2006},
   number={4},
   pages={489--493},
   issn={0209-9683},
   review={\MR{2260851}},
}

\bib{LT}{article}{
	author={\L uczak, T.},
	author={Thomass\'e, St.},
	title={Coloring dense graphs via {VC}-dimension},
	eprint={1007.1670},
	note={Submitted},
}

\bib{L11}{article}{
   author={Lyle, Jeremy},
   title={On the chromatic number of $H$-free graphs of large minimum
   degree},
   journal={Graphs Combin.},
   volume={27},
   date={2011},
   number={5},
   pages={741--754},
   issn={0911-0119},
   review={\MR{2824992}},
}

\bib{Ni}{article}{
	author={Nikiforov, V.},
	title={Chromatic number and minimum degree of $K_r$-free graphs},
	eprint={1001.2070},
}

\bib{OSch}{article}{
	author={Oberkampf, H.},
	author={Schacht, M.},
	title={On the structure of dense graphs with fixed clique number},
	journal={Combin. Probab. Comput.},
	eprint={1602.02302},
	note={To appear},
}

\bib{Th02}{article}{
   author={Thomassen, Carsten},
   title={On the chromatic number of triangle-free graphs of large minimum
   degree},
   journal={Combinatorica},
   volume={22},
   date={2002},
   number={4},
   pages={591--596},
   issn={0209-9683},
   review={\MR{1956996}},
}

\bib{Th07}{article}{
   author={Thomassen, Carsten},
   title={On the chromatic number of pentagon-free graphs of large minimum
   degree},
   journal={Combinatorica},
   volume={27},
   date={2007},
   number={2},
   pages={241--243},
   issn={0209-9683},
   review={\MR{2321926}},
}

\bib{Z47}{article}{
   author={Zarankiewicz, K.},
   title={Sur les relations syn\'etriques dans l'ensemble fini},
   language={French},
   journal={Colloquium Math.},
   volume={1},
   date={1947},
   pages={10--14},
   review={\MR{0023047}},
}

\end{biblist}
\end{bibdiv}

\end{document}